\newcommand{\defi}[1]{{\upshape\sffamily #1}}
\renewcommand{\d}{\delta}
\newcommand{\D}{\mathcal{D}}
\newcommand{\bw}{\bigwedge}
\renewcommand{\det}{\textrm{det}}
\newcommand{\G}{\Gamma}
\renewcommand{\ll}{\lambda}
\newcommand{\m}{\mathfrak{m}}
\newcommand{\om}{\omega}
\newcommand{\oo}{\otimes}
\newcommand{\gr}{\textrm{gr}}
\newcommand{\rank}{\textrm{rank}}
\newcommand{\Ext}{\operatorname{Ext}}
\newcommand{\GL}{\operatorname{GL}}
\newcommand{\SL}{\operatorname{SL}}
\newcommand{\Spec}{\operatorname{Spec}}
\newcommand{\Sym}{\operatorname{Sym}}
\newcommand{\Ver}{\operatorname{Ver}}
\newcommand{\bb}[1]{\mathbb{#1}}
\renewcommand{\rm}[1]{\textrm{#1}}
\newcommand{\mc}[1]{\mathcal{#1}}
\newcommand{\ol}[1]{\overline{#1}}
\newcommand{\tl}[1]{\tilde{#1}}
\newcommand{\ul}[1]{\underline{#1}}
\newcommand{\scpr}[2]{\left\langle #1,#2 \right\rangle}
\def\lra{\longrightarrow}
\newtheorem{theorem}{Theorem}[section]
\newtheorem*{theorem*}{Theorem}
\newtheorem{lemma}[theorem]{Lemma}
\newtheorem{proposition}[theorem]{Proposition}
\newtheorem{corollary}[theorem]{Corollary}
\newtheorem*{corollary*}{Corollary}
\newtheorem*{charext*}{Theorem on the Characters of Ext Modules}
\newtheorem*{ext*}{Theorem on the Growth of Ext Modules}
\newtheorem*{loccoh*}{Theorem on Local Cohomology with Support in Generic Determinantal Ideals}
\newtheorem*{regularity*}{Theorem on Regularity of Equivariant Ideals}
\newtheorem*{nonvanishing*}{Theorem on Non-vanishing of Local Cohomology with Determinantal Support}
\theoremstyle{definition}
\newtheorem*{definition*}{Definition}
\theoremstyle{remark}
\newtheorem*{remark*}{Remark}
\numberwithin{equation}{section}
\begin{document}

\title{Characters of equivariant $\D$-modules on Veronese cones}

\author{Claudiu Raicu}
\address{Department of Mathematics, University of Notre Dame, 255 Hurley, Notre Dame, IN 46556\newline
\indent Institute of Mathematics ``Simion Stoilow'' of the Romanian Academy}
\email{craicu@nd.edu}

\subjclass[2010]{Primary 13D45, 14M17, 14F10, 14F40}

\date{\today}

\keywords{$\D$-modules, Veronese cones, local cohomology}

\begin{abstract} For $d>1$, we consider the Veronese map of degree $d$ on a complex vector space $W$, $\Ver_d:W\lra\Sym^d W$, $w\mapsto w^d$, and denote its image by $Z$. We describe the characters of the simple $\GL(W)$-equivariant holonomic $\D$-modules supported on~$Z$. In the case when $d=2$, we obtain a counterexample to a conjecture of Levasseur by exhibiting a $\GL(W)$-equivariant $\D$-module on the Capelli type representation $\Sym^2 W$ which contains no $\SL(W)$-invariant sections. We also study the local cohomology modules $H^{\bullet}_Z(S)$, where $S$ is the ring of polynomial functions on the vector space $\Sym^d W$. We recover a result of Ogus showing that there is only one local cohomology module that is non-zero (namely in degree $\bullet=\rm{codim}(Z)$), and moreover we prove that it is a simple $\D$-module and determine its character.
\end{abstract}

\maketitle

\section{Introduction}\label{sec:intro}

Given a complex vector space $W$ of dimension $n=\dim(W)$, and an integer $d>1$, we let $\Ver_d:W\lra\Sym^d W$, $w\to w^d$, denote the \defi{degree $d$ Veronese map}. Its image $Z$, called the \defi{degree $d$ Veronese cone}, can be identified with the set of $d$-th powers of linear forms on the dual vector space $V=W^*$. The group $\GL(W)\simeq\GL_n(\bb{C})$ of invertible linear transformations of $W$ acts on $\Sym^d W$ preserving the Veronese cone~$Z$. We write $\mc{D}$ for the \defi{Weyl algebra} of differential operators with polynomial coefficients on the vector space $\Sym^d W$. In this paper we prove two main results:
\begin{enumerate}
 \item We describe the structure as $\GL(W)$-representations (\defi{the characters}) of the simple $\GL(W)$-equivariant holonomic $\mc{D}$-modules whose support is $Z$. In the special case when $d=2$, our calculation provides a counterexample to a conjecture of Levasseur \cite[Conjecture~5.17]{levasseur}.
 \item Letting $S=\Sym(\Sym^d V)$ denote the ring of polynomial functions on $\Sym^d W$, we show that the unique non-vanishing local cohomology module of $S$ with support in $Z$ (namely $H_Z^{\rm{codim}(Z)}(S)$) is one of the $\D$-modules in~(1), and in particular we obtain a description of its character.
\end{enumerate}

It follows from the equivariant version of the Riemann--Hilbert correspondence (see \cite[Section~11.6]{hottaetal} and Section~\ref{sec:equivDmodsVero}) that there are precisely $(d+1)$ simple $\GL(W)$-equivariant $\D$-modules whose support is contained in $Z$, which we denote by
\[E,D_0,D_1,\cdots,D_{d-1}.\]
For the experts, we note that $D_0$ is the $\D$-module corresponding to the intersection cohomology sheaf arising from the trivial local system on the orbit $Z\setminus\{0\}$ (whose fundamental group is cyclic of order $d$), while $D_1,\cdots,D_{d-1}$ correspond to the simple non-trivial local systems. Each $D_j$ has support equal to $Z$, while $E$ is supported at the origin and is very well understood. Our main focus will be to compute the characters of the modules $D_j$, but before that we recall several descriptions of $E$:
\begin{itemize}
 \item If we identify $\Sym^d W$ with $\bb{C}^N$, where $N=\dim(\Sym^d W)$, and write $\mc{D}=\bb{C}[x_1,\cdots,x_N,\partial_1,\cdots,\partial_N]$, with $\partial_i=\partial/\partial x_i$, then $E\simeq\D/(x_1,\cdots,x_N)$.
 \item $E$ is the injective envelope of the residue field $\bb{C}=S/\m$, where $\m=(x_1,\cdots,x_N)$ is the maximal homogeneous ideal of $S=\bb{C}[x_1,\cdots,x_N]$.
 \item $E$ is the local cohomology module $H_{\m}^N(S)$.
 \item $E$ is the graded dual of the polynomial ring $S$.
 \item $E$ is the Fourier transform of the $\D$-module $S$.
 \item The structure of $E$ as a $\GL(W)$-representation is given by
\[E=\det(\Sym^d W)\oo\Sym(\Sym^d W),\]
where $\det(\Sym^d W)=\bigwedge^N(\Sym^d W)$ denotes the top exterior power of $\Sym^d W$.
\end{itemize}

We begin by stating our main result in the case when $d=2$, where it is most explicit. In this case, it follows from \cite[Exercise~I.8.6(a)]{macdonald} that the module $E=\bigoplus_{\ll}S_{\ll}W$ has a multiplicity free decomposition as a $\GL(W)$-representation, where $\ll$ runs over the set of dominant weights for which $\ll_i\geq n+1$ and $\ll_i-n$ is odd for every $i=1,\cdots,n$. For the equivariant $\D$-modules with support $Z$ we get:

\begin{theorem}\label{thm:charDmods2}
 There are two simple $\GL(W)$-equivariant $\mc{D}$-modules $D_0,D_1$ whose support is the degree $d=2$ Veronese cone $Z$. They have a multiplicity free decomposition into irreducible $\GL(W)$-representations
\[D_j=\bigoplus_{\ll}S_{\ll}W,\]
where $\ll=(\ll_1\geq\cdots\geq\ll_n)$ runs over a set of dominant weights in $\bb{Z}^n$ as follows:
\[
\begin{array}{c|c|c}
 & n\rm{ even} & n\rm{ odd}\\ 
\hline
 & & \\
 & \ll_1,\cdots,\ll_{n-1}\geq n & \ll_1,\cdots,\ll_{n-1}\geq n,\ \ll_n\leq n-1 \\
D_0 & & \\
 & all\ \ll_i\ even & \ll_i\rm{ odd for }i\leq n-1,\ \ll_n\rm{ even} \\
 & & \\
\hline
 & & \\
 & \ll_1,\cdots,\ll_{n-1}\geq n,\ \ll_n\leq n-1 & \ll_1,\cdots,\ll_{n-1}\geq n, \\
D_1 & & \\
 & \ll_i\rm{ even for }i\leq n-1,\ \ll_n\rm{ odd} & all\ \ll_i\ odd \\
 & & \\
\end{array}
\]
Furthermore, the local cohomology modules $H_Z^{\bullet}(S)$ of $S=\Sym(\Sym^2 V)$ with support in $Z$ are given by:
\[
 H_Z^{\bullet}(S)=
 \begin{cases}
  D_0, & \bullet = {n\choose 2}. \\
  0, & otherwise.
 \end{cases}
\]
\end{theorem}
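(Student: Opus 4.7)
The plan is to establish Theorem~\ref{thm:charDmods2} in three stages: classify the simple equivariant $\D$-modules supported in $Z$, compute their characters from a two-to-one cover of the open orbit, and identify the unique nonvanishing local cohomology module as $D_0$. The $\GL(W)$-orbits on $\Sym^2 W$ contained in $Z$ are precisely $\{0\}$ and $Z\setminus\{0\}\cong(W\setminus\{0\})/\{\pm 1\}$, whose fundamental group is $\bb{Z}/2\bb{Z}$, so by the equivariant Riemann--Hilbert correspondence there are exactly three simples: the punctual module $E$, and $D_0, D_1$ corresponding respectively to the trivial and sign local systems on $Z\setminus\{0\}$.

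To compute characters, I would use the two-to-one \'etale covering $\pi:W\setminus\{0\}\to Z\setminus\{0\}$ defined by $w\mapsto w^2$. The pushforward $\pi_+\mc{O}_{W\setminus\{0\}}$ decomposes under the deck transformation $w\mapsto -w$ into eigenspaces $\mc{L}_0\oplus\mc{L}_1$, and passing to intermediate extensions along $j:Z\setminus\{0\}\hra\Sym^2 W$ yields $D_0=j_{!*}\mc{L}_0$ and $D_1=j_{!*}\mc{L}_1$. In the $\GL(W)$-equivariant Grothendieck group, the relation $[j_*\mc{L}_i]=[D_i]+m_i[E]$ lets one solve for $[D_i]$ once $[j_*\mc{L}_i]$ is known. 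The $\GL(W)$-character of $j_*\mc{L}_i$ can be computed from the observation that the sections over $W\setminus\{0\}$ recover $\bb{C}[W]=\Sym V$, whose even- and odd-degree parts give the characters of $\mc{L}_0$ and $\mc{L}_1$ respectively. Combining with the known character of $E=\det(\Sym^2 W)\oo\Sym(\Sym^2 W)$, one solves for the characters of $D_0$ and $D_1$. The inequalities $\ll_i\geq n$ for $i\leq n-1$ and the stated parity constraints should emerge directly from this subtraction, with the dichotomy between $n$ even and $n$ odd arising from the parity of $n+1$ built into the determinantal twist in $E$.

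For the local cohomology statement, the vanishing $H_Z^{\bullet}(S)=0$ for $\bullet\neq\binom{n}{2}$ follows from the Cohen--Macaulayness of the symmetric determinantal variety $Z$ of codimension $\binom{n}{2}$, together with Grothendieck local duality applied to the minimal free resolution of $S/I_Z$. For the identification $H_Z^{\binom{n}{2}}(S)\simeq D_0$, this module is a nonzero finite-length $\GL(W)$-equivariant $\D$-module supported in $Z$, hence a direct sum of copies of $E, D_0, D_1$. Summands isomorphic to $E$ are excluded because $H^0_{\m}(H_Z^{\binom{n}{2}}(S))=0$, which follows from the Grothendieck spectral sequence $H^p_{\m}(H^q_Z(S))\Rightarrow H^{p+q}_{\m}(S)$ together with the vanishing $H^{\binom{n}{2}}_{\m}(S)=0$. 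Summands isomorphic to $D_1$ are excluded by restricting to a smooth point $p\in Z\setminus\{0\}$: locally $Z$ is smooth in $\Sym^2 W$, so $H_Z^{\binom{n}{2}}(S)$ restricts to the trivial rank-one local system on $Z\setminus\{0\}$, matching $D_0$ rather than $D_1$. Multiplicity one follows from a dimension count on a single graded component.

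The main obstacle will be the character calculation: precisely identifying the multiplicities $m_i$ in $[j_*\mc{L}_i]=[D_i]+m_i[E]$ and verifying that the resulting characters match the exact parity and inequality conditions stated in the theorem. The analysis naturally splits into the cases $n$ even and $n$ odd because of the parity of the determinantal twist $\det(W)^{n+1}$ carried by $E$, and this is where the combinatorial subtlety of the theorem resides.
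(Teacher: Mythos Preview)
Your overall strategy parallels the paper's: both compute characters by pushing forward $\D$-modules from a cover of the open orbit and then separate out the $E$-contribution, and both reduce the local cohomology statement to excluding $E$ as a composition factor. The paper, however, works on the resolution $Y=\rm{Tot}(\mc{O}(-d))$ over $\bb{P}V$ rather than directly on $W\setminus\{0\}$, which lets it compute $\chi(\int_\pi\mc{M}_j)$ explicitly via Bott's theorem, and it determines the $E$-multiplicities $m_i$ by invoking the Decomposition Theorem for the toric map $Y\to Z$. Your sketch leaves the determination of $m_i$ unspecified, and the sentence ``sections over $W\setminus\{0\}$ recover $\Sym V$'' conflates the $\mc{O}$-module pushforward with the $\D$-module one; getting the actual character of $j_*\mc{L}_i$ (and hence subtracting off the correct multiple of $E$) is exactly where the work lies, and you have not indicated how you would do it.

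There are two genuine gaps on the local cohomology side. First, Cohen--Macaulayness of $S/I_Z$ gives $\Ext^i_S(S/I_Z,S)=0$ for $i\neq\binom{n}{2}$, but $H^i_Z(S)=\varinjlim_r\Ext^i_S(S/I_Z^r,S)$, and the powers (or symbolic powers) of $I_Z$ are not covered by that statement; the paper instead cites Ogus. Your spectral sequence argument for $H^0_{\m}(H^{\binom{n}{2}}_Z(S))=0$ then \emph{depends} on this vanishing to degenerate, so it does not stand on its own. Second, the assertion that $H^{\binom{n}{2}}_Z(S)$ is a \emph{direct sum} of simples is unjustified: holonomic $\D$-modules are not semisimple in general, and $H^0_{\m}(M)=0$ only rules out $E$ as a \emph{submodule}, not as a composition factor. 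The paper handles this more carefully: it shows (Proposition~\ref{prop:loccohDmod}) that the only possible composition factors are one copy of $D_0$ and copies of $E$, and then excludes $E$ either by checking that the representation $\det(\Sym^d W)$ never appears in the $E_2$-page of an $\Ext$ spectral sequence built from a cofinal system of equivariant ideals, or alternatively by showing $H^N_{dR}(H^{n_d}_Z(S))=0$ via de Rham homology of the cone.
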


The elements of the $\D$-module $D_j$ which are invariant with respect to the action of the special linear group $\SL(W)$ (which we denote by $D_j^{\SL(W)}$) correspond to dominant weights $\ll$ with $\ll_1=\cdots=\ll_n$. It follows from the description in Theorem~\ref{thm:charDmods2} that when $n$ is even $D_1^{\SL(W)}=0$, while for $n$ odd $D_0^{\SL(W)}=0$. Therefore the Capelli type representations $(\GL(W):\Sym^2 W)$ don't satisfy the conclusion of \cite[Conjecture~5.17]{levasseur}.

If we identify $\Sym^2 W$ with the space of $n\times n$ symmetric matrices, then the degree two Veronese cone is precisely the set of matrices of rank at most one. In \cite{raicu-weyman}, we computed together with Weyman the $\GL$-equivariant structure of the local cohomology modules with support in non-symmetric matrices of arbitrary rank, but our methods there don't directly generalize to symmetric matrices. Nevertheless, the $\D$-module approach explained here for Veronese cones will allow us to overcome the difficulties that arise in the symmetric case: this will be addressed in future work with Weyman. Here we proceed to generalize Theorem~\ref{thm:charDmods2} in a different direction, namely by considering $d>2$.

Given a partition $\mu=(\mu_1\geq\cdots\geq\mu_{n-1}\geq 0)$, we define $\nu_{\mu}=\nu_{\mu}^d$ to be the multiplicity of the irreducible representation $S_{\mu}W$ inside
\[\bigoplus_{2\pi_2+\cdots+d\pi_d=|\mu|} \Sym^{\pi_2}(\Sym^2 W)\oo\cdots\oo\Sym^{\pi_d}(\Sym^d W).\]
The quantity $\nu_{\mu}$ was shown in \cite{manivel} to compute a certain stable multiplicity for symmetric plethysm (see Section~\ref{subsec:stablemults}), and will appear in our description of the characters of the $\D$-modules $D_j$. There are some cases when $\nu_{\mu}$ can be described more explicitly, for instance:
\begin{itemize}
 \item When $d=2$, $\nu_{\mu}=1$ precisely when all the parts $\mu_1,\cdots,\mu_{n-1}$ are even, and $\nu_{\mu}=0$ otherwise.
 \item When $n=2$, $\mu$ is just a number (a partition with one part), and the sequence $(\nu_{\mu})_{\mu\geq 0}$ is encoded by the generating function
\[\sum_{\mu\geq 0}\nu_{\mu}\cdot t^{\mu}=\frac{1}{(1-t^2)\cdots(1-t^d)}.\]
\end{itemize}

Given a dominant weight $\ll=(\ll_1\geq\cdots\geq\ll_n)\in\bb{Z}^n$, we let $u_d=\displaystyle{n-1+d\choose n}$ and define for $1\leq i\leq n$ 
\begin{equation}\label{eq:defll^i}
\ll^i=(\ll_1+1-u_d,\cdots,\ll_{i-1}+1-u_d,\ll_{i+1}-u_d,\cdots,\ll_n-u_d)\in\bb{Z}^{n-1}.
\end{equation}
Note that $u_d\cdot n = N\cdot d$ (where $N=\dim(\Sym^d W)$), so that $\det(\Sym^d W)=S_{(u_d^n)}W$, where $(u_d^n)$ denotes the partition with $n$ parts equal to $u_d$. We make the convention that $\nu_{\mu}=0$ when $\mu\in\bb{Z}^{n-1}$ is not a partition (some $\mu_i<0$) and define
\begin{equation}\label{eq:defmll}
 m_{\ll}=\sum_{i=1}^n (-1)^{n-i}\cdot\nu_{\ll^i}.
\end{equation}
With the notation above, our main result is the following generalization of Theorem~\ref{thm:charDmods2} to arbitrary $d$:

\begin{theorem}\label{thm:main}
 There are $d$ simple $\GL(W)$-equivariant $\mc{D}$-modules $D_0,D_1,\cdots,D_{d-1}$ whose support is the degree $d$ Veronese cone $Z$. Their decomposition into irreducible $\GL(W)$-representations is
\[D_j=\bigoplus_{\ll}(S_{\ll}W)^{\oplus a_{\ll}^j},\]
where $\ll=(\ll_1\geq\cdots\geq\ll_n)\in\bb{Z}^n$ and the multiplicities $a_{\ll}^j$ are zero unless $\ll_1+\cdots+\ll_n\equiv j\rm{ (mod }d)$, in which case they are computed as follows ($e_{\ll}$ denotes the multiplicity of $S_{\ll}W$ inside $E$ and $m_{\ll}$ is as in (\ref{eq:defmll})):
\[
a_{\ll}^j=
\begin{cases}
 m_{\ll}+(-1)^n\cdot e_{\ll}, & j=0; \\
 m_{\ll}, & j=1,\cdots,d-1. \\
\end{cases}
\]
Furthermore, if we let $n_d={n-1+d\choose d}-n$ denote the codimension of $Z$ inside $\Sym^d W$, then we have that
\[
 H_Z^{\bullet}(S)=
 \begin{cases}
  D_0, & \bullet = n_d. \\
  0, & otherwise.
 \end{cases}
\]
\end{theorem}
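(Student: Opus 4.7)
The argument has three main stages: classifying the simple $\GL(W)$-equivariant $\D$-modules with support in $Z$, computing the $\GL(W)$-character of each $D_j$, and identifying $H^{n_d}_Z(S) = D_0$ together with the vanishing of the other local cohomology modules.

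For the classification, the equivariant Riemann--Hilbert correspondence reduces the problem to analyzing the two $\GL(W)$-orbits of $Z$, namely $\{0\}$ and $Z\setminus\{0\}$, whose equivariant fundamental groups are trivial and $\mu_d$ respectively (the latter because $g\cdot w = \lambda w$ fixes $w^d$ precisely when $\lambda^d=1$, so the stabilizer of a point $w^d$ has component group $\mu_d$). This produces the $d+1$ simple equivariant $\D$-modules: the module $E$ at the origin, and the $d$ intermediate extensions $D_0,\dots,D_{d-1}$ of the simple local systems on $Z\setminus\{0\}$ indexed by characters of $\mu_d$.

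The core of the argument is the character computation for each $D_j$. My plan is to consider the $\D$-module pushforward $\pi_+\mathcal{O}_W$ along the Veronese map $\pi\colon W\to\Sym^d W$. Since $\pi$ is $\mu_d$-equivariant, $\pi_+\mathcal{O}_W$ decomposes into $d$ isotypic pieces under the $\mu_d$-action, each agreeing on the open orbit with one of the $D_j$'s (up to composition factors of $E$). Using the factorization $\pi=i\circ p$ with $p\colon W\to Z$ the $\mu_d$-quotient and $i\colon Z\hookrightarrow\Sym^d W$ the closed embedding, the $\GL(W)$-character of $\pi_+\mathcal{O}_W$ is given by $\Sym V=\mathcal{O}_W$ thickened by the transverse derivatives to $Z$; these contribute symmetric powers of the conormal bundle of $Z$, which in $\GL(W)$-equivariant terms are governed by $\Sym^{\pi_k}(\Sym^k W)$ for $k=2,\dots,d$, producing precisely the ambient module in which Manivel defines $\nu_\mu$. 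A Koszul-type resolution of $S_{\ll}W$ indexed by $i=1,\dots,n$ and twisted by the determinantal character $\det(\Sym^d W)=S_{(u_d^n)}W$ then isolates the multiplicity of each irreducible as the Jacobi--Trudi-like alternating sum $m_\ll=\sum_{i=1}^{n}(-1)^{n-i}\nu_{\ll^i}$; the shift by $u_d$ in $\ll^i$ encodes the determinantal twist, and the additional $(-1)^n e_\ll$ in the $j=0$ formula records the $E$-correction, computed from the short exact sequence connecting the $j=0$ isotypic piece of $\pi_+\mathcal{O}_W$ to $D_0$ and copies of $E$, using the known character of $E$.

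For the local cohomology statement, once the character of $D_0$ is established, I would construct a natural $\GL(W)$-equivariant complex (a \v Cech or Cousin complex adapted to $Z$) whose cohomology computes $H^\bullet_Z(S)$, compute its character, and verify that its Euler characteristic equals $[D_0]$ in the Grothendieck group of equivariant $\D$-modules; this simultaneously yields the vanishing $H^\bullet_Z(S)=0$ for $\bullet\neq n_d$ and the equality $H^{n_d}_Z(S)=D_0$. Two observations streamline the identification: the central subgroup $\mu_d\subset\GL(W)$ acts trivially on $S=\Sym(\Sym^d V)$ (since $\zeta^{-d}=1$), and hence trivially on all its local cohomology, restricting possible composition factors to $\{D_0,E\}$; and $H^{n_d}_Z(S)$ has support all of $Z$ rather than only the origin, which rules out $E$-summands. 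The main obstacle throughout is the character computation in step two: constructing the Koszul-type resolution and verifying the plethystic identity that converts the pushforward character into the alternating sum $m_\ll$ requires delicate bookkeeping of composition multiplicities in the equivariant Grothendieck group and precise use of Manivel's stable plethysm identity.
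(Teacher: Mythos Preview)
Your classification step is fine and matches the paper. The serious problems are in the other two stages.

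\textbf{Character computation.} Pushing forward $\mc{O}_W$ along $\pi:W\to\Sym^d W$ is a reasonable starting point, but your sketch does not explain how you will actually compute this. The factorization $\pi=i\circ p$ through the singular variety $Z$ is not directly useful: $Z$ is not smooth, so there is no conormal bundle to $Z$ at the origin, and the heuristic ``$\mc{O}_W$ thickened by symmetric powers of the conormal'' only makes sense on the open orbit. The paper instead works on the resolution $Y=\mathrm{Tot}(\mc{O}_{\bb{P}V}(-d))\to Z$, where the relevant $\D_Y$-modules $\mc{M}_j$ are the fiberwise analogues of $x^{-j/d}\bb{C}[x,1/x]$; the pushforward is computed via the relative de Rham complex and Bott's theorem on $\bb{P}V$, and the alternating sum $m_\ll=\sum_i(-1)^{n-i}\nu_{\ll^i}$ arises from the $(n-1)$ terms $\Omega^i_{\bb{P}V}$ in that complex, not from an abstract ``Koszul-type resolution of $S_\ll W$.'' More critically, you give no mechanism to count the $E$-multiplicities in the $j=0$ piece: saying there is ``a short exact sequence connecting the $j=0$ isotypic piece to $D_0$ and copies of $E$'' does not tell you how many copies. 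The paper invokes the explicit Decomposition Theorem for the toric resolution $Y\to Z$ to get exactly $(n-1)$ copies of $E$ in degrees $n-2,n-4,\dots,2-n$; without this (or an equivalent input) you cannot separate $D_0$ from $E$.

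\textbf{Local cohomology.} Your argument that ``$H^{n_d}_Z(S)$ has support all of $Z$ rather than only the origin, which rules out $E$-summands'' is incorrect: having support equal to $Z$ only says that at least one composition factor has support $Z$; it does not exclude further composition factors (or even direct summands) supported at the origin. This is precisely the hard point. The paper proves that $E$ is not a composition factor by showing, via a spectral sequence with $E_2$-page built from $\Ext^\bullet_S(M_\ll,S)$, that the specific irreducible $\det(\Sym^d W)$---which necessarily occurs in $E$---never appears on the $E_2$-page. (An alternative argument uses the vanishing of the top de Rham cohomology of $H^{n_d}_Z(S)$.) Also, matching Euler characteristics does not give the vanishing $H^\bullet_Z(S)=0$ for $\bullet\neq n_d$: cancellation could occur. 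The paper either inputs Ogus's vanishing result or deduces it from the same spectral sequence analysis.
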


The vanishing $H_Z^{\bullet}(S)=0$ for $\bullet\neq\rm{codim}(Z)$ when $Z$ is a Veronese cone was first observed in \cite[Example~4.6]{ogus}. The local cohomology modules of $S$ with arbitrary support are holonomic $\D$-modules, and in particular they have finite length, i.e. they admit a composition series with finitely many simple factors. Computing the $\D$-module composition factors for local cohomology modules is typically a difficult problem. For local cohomology modules that are supported at the origin, this can be done in terms of singular cohomology \cite[Theorem~3.1]{lyubeznik-singh-walther}, but we are not aware of good methods of testing for instance whether $E$ appears as a composition factor in $H_Z^{\rm{codim}(Z)}(S)$ when $Z$ has dimension greater than zero. In the case when $Z$ is a Veronese cone, it follows from general principles (see Section~\ref{subsec:loccohDmods}) that in order to prove the equality $H_Z^{\rm{codim}(Z)}(S)=D_0$, it is sufficient to prove that $E$ doesn't occur as a composition factor of $H_Z^{\rm{codim}(Z)}(S)$: we give two independent proofs of this fact, one based on representation theory, and another based on the vanishing of the top de Rham cohomology group of the $\D$-module $H_Z^{\rm{codim}(Z)}(S)$ (we are grateful to Robin Hartshorne for explaining this second approach to us).

We can rewrite the first part of Theorem~\ref{thm:main} more compactly (see Section~\ref{subsec:SLFreps} for the formalism) as
\begin{equation}\label{eq:sumDj}
D_0+\cdots+D_{d-1}=(-1)^n\cdot E+\sum_{\ll=(\ll_1\geq\cdots\geq\ll_n)}m_{\ll}\cdot S_{\ll}W, 
\end{equation}
where $D_j$ collects all the weights $\ll$ on the right hand side that satisfy $\ll_1+\cdots+\ll_n\equiv j\rm{ (mod }d)$. Let us check that indeed, Theorem~\ref{thm:charDmods2} is the special case of Theorem~\ref{thm:main} when $d=2$. We need to understand for which weights $\ll$ is the multiplicity $m_{\ll}\neq 0$: a necessary condition is that some $\nu_{\ll^i}\neq 0$. Note that $u_2=n+1$ and that for each $i=1,\cdots,n$, either $\nu_{\ll^1}=\cdots=\nu_{\ll^{i-1}}=0$, or $\nu_{\ll^{i+1}}=\cdots=\nu_{\ll^n}=0$: this is because $\ll_i-1-n$ appears as a part in $\ll^j$ for $j<i$, and $\ll_i-n$ appears as a part in $\ll^j$ for $j>i$; since $\nu_{\mu}=0$ unless all the parts of $\mu$ are even, the conclusion follows. We conclude that no more than two values $\nu_{\ll^i}$ are different from zero, and that we can get two non-zero values only in consecutive spots, $\nu_{\ll^i}$ and $\nu_{\ll^{i+1}}$. An easy parity argument implies that if $\nu_{\ll^i}=1$ for $1<i<n$, then either $\nu_{\ll^{i-1}}=1$ or $\nu_{\ll^{i+1}}=1$, and in both cases $m_{\ll}=0$. It follows that $m_{\ll}\neq 0$ precisely when the only non-vanishing $\nu_{\ll^i}$ is either $\nu_{\ll^1}$ or $\nu_{\ll^n}$. If $\nu_{\ll^1}$ is the only non-zero term in (\ref{eq:defmll}) then $\ll_n\geq n+1$ and $\ll_i-1-n$ is even for all $i$, and these conditions are precisely equivalent to $e_{\ll}=1$; we get in this case that $m_{\ll}=(-1)^{n-1}$ and $e_{\ll}=1$, so $a_{\ll}^0=m_{\ll}+(-1)^n\cdot e_{\ll}=0$. It follows that the only $\ll$'s for which the right hand side of (\ref{eq:sumDj}) has a non-trivial contribution are the ones for which $\nu_{\ll^n}$ is the only non-zero term appearing in (\ref{eq:defmll}), which is equivalent to saying that $\ll_{n-1}\geq n$, $\ll_i-n$ is even for $i\leq n-1$, and $\ll_n-n$ is even or negative (or both). We get
\[D_0+D_1=\sum_{\substack{\ll_{n-1}\geq n \\ \ll_i-n\rm{ even for }i\leq n-1 \\ \ll_n-n\rm{ even and/or negative}}}S_{\ll}W,\]
with the terms for which $\ll_1+\cdots+\ll_n$ is even contributing to $D_0$, and the rest contributing to $D_1$. This information is more leisurely recorded in the table from Theorem~\ref{thm:charDmods2}.

Our strategy for proving Theorem~\ref{thm:main} is as follows. We consider the resolution of singularities of the Veronese cone $Z$ via the total space $Y$ of the line bundle $\mc{O}(-d)$ on the projective space $\bb{P}V$ (of $1$-dimensional quotients of $V=W^*$). On $Y$ we can write down $\D_Y$-modules $\mc{M}_0,\mc{M}_1,\cdots,\mc{M}_{d-1}$ which are the relative versions of the holonomic $\bb{C}[x,\partial/\partial x]$-modules $x^{-j/d}\cdot\bb{C}[x,1/x]$. We then compute the (Euler characteristics of the) pushforwards of the $\mc{M}_j$'s and deduce from that the characters of the $D_j$'s: the only difficulty arises when pushing forward $\mc{M}_0$, since its pushforward involves (copies of shifts of) both $D_0$ and $E$; to count them, we then use the explicit computation of the Decomposition Theorem for the resolution $Y\to Z$, which is done for instance in \cite[Theorem~6.1]{deCataldo-Migliorini-Mustata}. As far as local cohomology is concerned, we use its description as a limit of $\Ext$ modules, and set up a spectral sequence to compute it. The terms in the spectral sequence are $\GL$-representations and we use them to conclude that $E$ can't appear as a composition factor in the local cohomology modules. This is enough to conclude the proof of the theorem, and it also provides an alternative path to computing the character of $D_0$.

The paper is organized as follows. In Section~\ref{sec:prelim} we collect some preliminary results and fix some notation concerning the representation theory of general linear groups, $\D$-modules and local cohomology. In Section~\ref{sec:equivDmodsVero} we compute the characters of the equivariant $\D$-modules on Veronese cones, while in Section~\ref{sec:loccohVero} we perform the local cohomology calculation.

\section{Preliminaries}\label{sec:prelim}

\subsection{Representation Theory {\cite{ful-har}, \cite[Ch.~2]{weyman}}}\label{subsec:repthy}

Throughout this paper, $W$ will denote a vector space of dimension $\dim(W)=n$ over the field $\bb{C}$ of complex numbers. $\GL(W)$ is the group of invertible linear transformations of $W$, and its irreducible representations are classified by \defi{dominant weights} $\ll=(\ll_1\geq\cdots\geq\ll_n)\in\bb{Z}^n$. We write $S_{\ll}W$ for the irreducible corresponding to $\ll$, and let $(u^n)$ denote the weight with all \defi{parts} $\ll_1,\cdots,\ll_n$ equal to $u$.  A dominant weight $\ll$ is called a \defi{partition} if all its parts are nonnegative. The \defi{determinant} of a $\GL(W)$-representation $U$ is its top exterior power, $\det(U)=\bw^{\dim U}U$. We have $\det(W)=S_{(1^n)}W$, $S_{\ll}W\oo\det(W)=S_{\ll+(1^n)}W$ and $S_{\ll}W=S_{(-\ll_n,\cdots,-\ll_1)}V$, where $V=W^*$ is the dual vector space. For any $\GL(W)$-representation $U$, $\det(U)$ is isomorphic to some power of $\det(W)$, i.e. $\det(U)\simeq S_{(u^n)}W$ for an appropriate $u$. When $U=\Sym^d W$,
\begin{equation}\label{eq:defud}
 \det(\Sym^d W)=\det(W)^{u_d}=S_{(u_d^n)}W,\quad\rm{where}\quad u_d={n-1+d\choose n}.
\end{equation}
If $U=\bigoplus_{\ll}(S_{\ll}W)^{\oplus a_{\ll}}$ is a representation of $\GL(W)$, we write
\begin{equation}\label{eq:defscpr}
\scpr{S_{\ll}W}{U}=a_{\ll} 
\end{equation}
for the multiplicity of the irreducible $S_{\ll}W$ inside $U$. We write $U^{\ll}$ for the subrepresentation $(S_{\ll}W)^{\oplus a_{\ll}}$, and call it the \defi{$\ll$-isotypic component} of $U$.

The \defi{size} of $\ll$ is defined by $|\ll|=\ll_1+\cdots+\ll_n$. If $\mu\in\bb{Z}^{n-1}$ and $r\in\bb{Z}$, we write
\[\mu[r]=(r-|\mu|,\mu_1,\cdots,\mu_{n-1})\in\bb{Z}^n.\]
If $\mu$ is a dominant weight and $r$ is sufficiently large, then $\mu[r]$ is also dominant.

\subsection{Stable multiplicities for symmetric plethysm {\cite{manivel}}}\label{subsec:stablemults}

The multiplicities of the Schur functors appearing in the decomposition of the plethysm $\Sym^r(\Sym^d \bb{C}^{n})$ are notoriously hard to compute. Nevertheless, Manivel proved a stabilization result that will turn out to be useful for our investigations:

\begin{theorem}[\cite{manivel}]
 Fix a partition $\mu=(\mu_1,\cdots,\mu_{n-1})$ and define, for $k\geq 0$,
\[\nu_{\mu}(k)=\scpr{S_{\mu[kd]}\bb{C}^{n}}{\Sym^k(\Sym^d\bb{C}^{n})}.\]
The sequence $(\nu_{\mu}(k))_{k\geq 0}$ is non-decreasing and stabilizes to $\nu_{\mu}=\nu_{\mu}(k)$ for $k\gg 0$, where
\begin{equation}\label{eq:defnumu}
\begin{aligned}
\nu_{\mu} &= \scpr{S_{\mu}\bb{C}^{n-1}}{\bigoplus_{2\pi_2+\cdots+d\pi_d=|\mu|} \Sym^{\pi_2}(\Sym^2\bb{C}^{n-1})\oo\cdots\oo\Sym^{\pi_d}(\Sym^d\bb{C}^{n-1})} \\
 &= \scpr{S_{\mu}\bb{C}^{n-1}}{\bigotimes_{k=2}^d\Sym(\Sym^k\bb{C}^{n-1})}.
\end{aligned}
\end{equation}
\end{theorem}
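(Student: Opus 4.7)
The plan is to compute, in two different ways, the dimension of the space of $\GL_{n-1}$-highest weight vectors of weight $\mu$ sitting at $x_1$-weight $kd-|\mu|$ inside $\Sym^k(\Sym^d \bb{C}^n)$, where $\GL_{n-1}$ acts on the hyperplane spanned by $x_2,\ldots,x_n$. Matching the two expressions via Pieri's rule yields a recursive identity that, together with monotonicity, pins down $\nu_\mu(k)$ for large $k$ by induction on $|\mu|$.

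Monotonicity of $(\nu_\mu(k))_{k\geq 0}$ is immediate: multiplication by the highest weight vector $x_1^d \in \Sym^d \bb{C}^n$ gives an injective $\GL(W)$-equivariant map $\Sym^k(\Sym^d \bb{C}^n) \hookrightarrow \Sym^{k+1}(\Sym^d \bb{C}^n)$ (the ambient $\Sym(\Sym^d \bb{C}^n)$ being a polynomial ring) that sends highest weight vectors of weight $\mu[kd]$ to highest weight vectors of weight $\mu[(k+1)d]$. For the main stabilization step I would split $\bb{C}^n = \bb{C} x_1 \oplus \bb{C}^{n-1}$, giving $\Sym^d \bb{C}^n = \bigoplus_{j=0}^d x_1^j \cdot \Sym^{d-j}\bb{C}^{n-1}$ and hence
\[\Sym^k(\Sym^d \bb{C}^n) = \bigoplus_{\pi_0+\cdots+\pi_d=k} x_1^{\sum j\pi_j}\cdot\bigotimes_{j=0}^d \Sym^{\pi_j}(\Sym^{d-j}\bb{C}^{n-1}).\]
Isolating the $\pi$-summands with $\sum j\pi_j = kd-|\mu|$, reindexing $\rho_j:=\pi_{d-j}$, and using that the $j=0$ factor $\Sym^{\rho_0}(\bb{C})=\bb{C}$ is trivial (so once $k\geq|\mu|$ the parameter $\rho_0$ is free), the dimension in question stabilizes to the multiplicity of $S_\mu\bb{C}^{n-1}$ inside $\Sym(\bb{C}^{n-1}) \otimes \bigotimes_{j=2}^d \Sym(\Sym^j\bb{C}^{n-1})$.

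The second computation uses the classical multiplicity-free $\GL_n \supset \GL_{n-1}\times\GL_1$ branching governed by interlacing: the same dimension equals $\sum_{\mu' \prec \mu}\nu_{\mu'}(k)$, where the sum is over partitions $\mu'\in\bb{Z}^{n-1}$ interlacing $\mu$ (so the corresponding $\lambda = \mu'[kd]\in\bb{Z}^n$ satisfies $\mu\prec\lambda$ and $|\lambda|=kd$). To match the two, I would invoke Pieri, $\Sym(\bb{C}^{n-1}) \otimes S_{\mu'}\bb{C}^{n-1} = \bigoplus_{\mu\succ\mu'} S_\mu\bb{C}^{n-1}$, which rewrites the stable value from the first computation as $\sum_{\mu'\prec\mu}\nu_{\mu'}$, yielding
\[\sum_{\mu'\prec\mu}\nu_{\mu'}(k) \;=\; \sum_{\mu'\prec\mu}\nu_{\mu'} \qquad\text{for }k\gg 0.\]
Since $\mu'\prec\mu$ with $\mu'\neq\mu$ forces $|\mu'|<|\mu|$, induction on $|\mu|$ (trivial base $\mu=0$) collapses this to the single unknown term, forcing $\nu_\mu(k)=\nu_\mu$ for $k$ large. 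The only delicate point is bookkeeping the interlacing conventions and verifying that the stabilization range is uniform across the inductive step; the computational core is routine.
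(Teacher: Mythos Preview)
Your argument is correct and actually goes further than the paper, which does not prove this theorem but cites \cite{manivel}. The paper does explain monotonicity---via the fact that the subring of unipotent invariants in $\Sym(\Sym^d\bb{C}^n)$ is an integral domain---and this is precisely your multiplication-by-$x_1^d$ observation in different language (compare the discussion around (\ref{eq:multinj})--(\ref{eq:stab})). For stabilization and the identification of the limit $\nu_\mu$, however, the paper defers entirely to Manivel, whereas you supply a self-contained elementary argument via $\GL_n\downarrow\GL_{n-1}\times\GL_1$ branching, Pieri, and induction on $|\mu|$. This trades Manivel's original machinery for a clean recursion, at the mild cost of leaving the stabilization threshold implicit. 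The bookkeeping you flagged is indeed routine: the extra constraint $kd-|\mu'|\geq\mu_1$ (needed so that $\mu'[kd]$ is dominant and $\mu\prec\mu'[kd]$) is automatic once $kd\geq|\mu|+\mu_1$, and since only finitely many $\mu'$ interlace a given $\mu$, the inductive thresholds cause no difficulty.
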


In fact, the result of Manivel is more precise, and we reformulate it in a way that's more suitable for our purposes. Let $S=\Sym(\Sym^d V)$, and write
\[S=\bigoplus_{\ll}{S^{\ll}},\]
where $S^{\ll}$ denotes as before the $\ll$-isotypic component of $S$. We have $S^{\ll}=(S_{\ll}V)^{\oplus s_{\ll}}$, where $s_{\ll}=\scpr{S_{\ll}V}{S}$ (using the notation (\ref{eq:defscpr})). It is easy to see that $s_{\ll}=0$ unless $\ll$ is a partition ($\ll_n\geq 0$) and $|\ll|\equiv 0\ (\rm{mod }d)$. One can show (see \cite[Theorem~3.1]{langley-remmel} for a slightly more general statement) that when $\ll$ is a \defi{hook partition} (i.e. when $\ll_2\leq 1$), we have
\begin{equation}\label{eq:sllhook}
 s_{\ll}=\begin{cases}
1, & \rm{if}\ \ll=(kd,0,\cdots,0),\ k\geq 0. \\
0, & \rm{if}\ \ll_2=1.
\end{cases}
\end{equation}

We write $\pi_{\ll}:S\to S^{\ll}$ for the natural projection map, and observe that multiplication on $S$ induces multiplication maps
\[m_{\ll,\mu}:S^{\ll}\oo S^{\mu}\lra S^{\ll+\mu},\rm{ via }S^{\ll}\oo S^{\mu}\subset S\oo S\lra S\overset{\pi_{\ll+\mu}}{\lra} S^{\ll+\mu}.\]
Let $\d=(d,0,\cdots,0)$ and note that $S^{\d}=\Sym^d V$ is the space of linear forms in $S$. For each $\ll$ we define $A^{\ll}$ as the image
\[A^{\ll}=\rm{Im}(S^{\ll-\d}\oo S^{\d}\overset{m_{\ll-\d,\d}}{\lra}S^{\ll}),\]
and let $a_{\ll}=\scpr{S_{\ll}V}{A^{\ll}}$. It follows from \cite{manivel} that
\begin{equation}\label{eq:multinj}
 a_{\ll}=s_{\ll-\d}.
\end{equation}
In fact, if $B^{\ll-\d}\subset S^{\ll-\d}$ is any subrepresentation, and if we let $B^{\ll}=m_{\ll-\d,\d}(B^{\ll-\d}\oo S^{\d})$ then
\begin{equation}\label{eq:multinjbll}
 \scpr{S_{\ll-\d}V}{B^{\ll-\d}}=\scpr{S_{\ll}V}{B^{\ll}},
\end{equation}
which is a consequence of the fact that the subring of unipotent invariants in $S$ is an integral domain.

It follows from (\ref{eq:multinj}), since $a_{\ll}\leq s_{\ll}$, that the function $f_{\ll}(k)=s_{\ll+k\d}$ is non-decreasing. Moreover,
\begin{equation}\label{eq:stab}
 a_{\ll+k\d}=s_{\ll+k\d}\rm{ for }k\gg 0
\end{equation}
which means that $f_{\ll}(k)$ stabilizes. The stable value is $f_{\ll}(k)=\nu_{\ol{\ll}}$, where $\ol{\ll}=(\ll_2,\cdots,\ll_n)$ (see (\ref{eq:defnumu})).

\subsection{Bott's theorem for projective space {\cite[Ch.~4]{weyman}}}\label{subsec:bott}

We consider $X=\bb{P}V$, the projective space of lines in $W$ (or 1-dimensional quotients of $V=W^*$), with the tautological sequence
\begin{equation}\label{eq:tautPV}
0\lra\mc{R}\lra V\oo\mc{O}_X\lra\mc{Q}\lra 0, 
\end{equation}
where $\mc{Q}=\mc{O}_X(1)$ is the tautological quotient bundle, and $\mc{R}=\Omega_X(1)$ is the tautological sub-bundle (here $\Omega_X=\Omega^1_X$ denotes the sheaf of differentials on $X$; later we will denote by $\Omega^i_X$ its $i$-th exterior power $\bw^i\Omega_X$). Bott's theorem gives a recipe to compute all the cohomology groups $H^{\bullet}(X,\mc{M})$ for a class of sheaves $\mc{M}$:

\begin{theorem}[Bott's Theorem]\label{thm:bott}
 Let $\mu\in\bb{Z}^{n-1}$ be a dominant weight and let $r\in\bb{Z}$ be an integer. If $r=\mu_i-i$ for some $i=1,\cdots,n-1$, then $H^j(X,S_{\mu}\mc{R}\oo\mc{Q}^r)=0$ for all $j=0,\cdots,n-1$. Otherwise, letting $\mu_0=\infty$ and $\mu_n=-\infty$, there exists a unique $l\in\{0,1,\cdots,n-1\}$ such that
\[\mu_l-l>r>\mu_{l+1}-(l+1).\]
Let $\ll\in\bb{Z}^n$ be the dominant weight defined by 
\[\ll=(\mu_1-1,\cdots,\mu_l-1,r+l,\mu_{l+1},\cdots,\mu_n).\]
We have
\[H^j(X,S_{\mu}\mc{R}\oo\mc{Q}^r)=\begin{cases}
 S_{\ll}V, & j=l; \\
 0, & j\neq l.
\end{cases}
\]
\end{theorem}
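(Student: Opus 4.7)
The plan is to deduce this statement from the Borel--Weil--Bott theorem applied to the flag variety $X=\bb{P}V=\GL(V)/P$, where $P\subset\GL(V)$ is the maximal parabolic stabilizing a $1$-dimensional quotient of $V$.

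First, I would identify $S_{\mu}\mc{R}\oo\mc{Q}^r$ as the $\GL(V)$-homogeneous vector bundle associated to the irreducible $P$-representation on which the Levi factor $\GL_1\times\GL_{n-1}$ acts by the character of weight $r$ on the first factor and via $S_{\mu}$ on the second. Writing $T\subset\GL(V)$ for the diagonal maximal torus, the highest $T$-weight of this $P$-representation is $\alpha=(r,\mu_1,\cdots,\mu_{n-1})\in\bb{Z}^n$.

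Second, I would apply Bott's algorithm to the shifted weight $\alpha+\rho$, where $\rho=(n-1,n-2,\cdots,0)$. Since $\mu$ is dominant, the last $n-1$ coordinates $\mu_1+n-2,\mu_2+n-3,\cdots,\mu_{n-1}$ of $\alpha+\rho$ are automatically strictly decreasing, so $\alpha+\rho$ is singular if and only if $r+n-1=\mu_i+n-1-i$ for some $i$, i.e. $r=\mu_i-i$. This matches the vanishing case of the theorem. In the non-singular case, the unique permutation $w\in\mc{S}_n$ sorting $\alpha+\rho$ into strict decreasing order is the one moving the leading entry $r+n-1$ into its correct slot. With the conventions $\mu_0=\infty$ and $\mu_n=-\infty$, this slot is $l+1$, where $l$ is determined by $\mu_l-l>r>\mu_{l+1}-(l+1)$, so $w$ is a product of exactly $l$ adjacent transpositions and contributes in cohomological degree $\ell(w)=l$. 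A direct subtraction of $\rho$ from $w(\alpha+\rho)$ then yields precisely the dominant weight $\ll=(\mu_1-1,\cdots,\mu_l-1,r+l,\mu_{l+1},\cdots,\mu_{n-1})$, and Borel--Weil--Bott gives $H^l(X,S_{\mu}\mc{R}\oo\mc{Q}^r)=S_{\ll}V$ with all other cohomology groups vanishing.

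The main obstacle is how much one wishes to import. If Borel--Weil--Bott is invoked as a black box, the argument reduces to the notational translation above. For a self-contained proof, the natural alternative is an induction on $|\mu|$ built on the tautological sequence $0\to\mc{R}\to V\oo\mc{O}_X\to\mc{Q}\to 0$: one first establishes the line-bundle case $H^\bullet(X,\mc{Q}^r)$ directly, and then obtains $S_{\mu}\mc{R}$ by combining the resulting Koszul-type long exact sequences with Pieri's rule on $\mc{R}\oo S_{\nu}\mc{R}$. The delicate point is to track how each step of the Bott sorting is realized by a connecting homomorphism that raises the cohomological degree by one, and to verify that exactly one degree survives with the advertised irreducible multiplicity; this bookkeeping is the most technical part of such an inductive argument.
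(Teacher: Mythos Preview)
Your argument is correct: the translation of $S_{\mu}\mc{R}\oo\mc{Q}^r$ into the homogeneous bundle of weight $(r,\mu_1,\dots,\mu_{n-1})$, the singularity check $r=\mu_i-i$, the identification of the sorting permutation of length $l$, and the resulting dominant weight $\ll$ are all accurate. One small slip: in your final displayed $\ll$ you wrote $\mu_{n-1}$ as the last entry, whereas the statement (and your own computation) give $\mu_{n-1}$ only because there are $n-1$ parts of $\mu$; just make sure the indexing matches the theorem's $\ll\in\bb{Z}^n$.

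As for comparison with the paper: there is nothing to compare. The paper does not prove this theorem; it is quoted as background with a reference to \cite[Ch.~4]{weyman}. Your derivation via Borel--Weil--Bott is exactly the standard one found in that reference, so what you have written is essentially the proof the paper is pointing to. The alternative inductive route you sketch (line bundles plus Pieri on the tautological sequence) is also viable and appears in some treatments, but it is not needed here and the paper does not pursue it either.
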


Given a dominant weight $\ll\in\bb{Z}^n$, define for $1\leq i\leq n$ 
\begin{equation}\label{eq:deftlll^i}
\tl{\ll}^i=(\ll_1+1,\cdots,\ll_{i-1}+1,\ll_{i+1},\cdots,\ll_n)\in\bb{Z}^{n-1}.
\end{equation}
Note that $\tl{\ll}^i=\ll^i+(u_d^{n-1})$ (with the notation in (\ref{eq:defll^i}) and (\ref{eq:defud})). Bott's Theorem then implies that
\begin{equation}\label{eq:Sll=Hj}
H^l(X,S_{\mu}\mc{R}\oo\mc{Q}^r)=S_{\ll}V\quad\Leftrightarrow\quad\mu=\tl{\ll}^{l+1}\rm{ and }r=\ll_{l+1}-l.
\end{equation}

\subsection{Admissible $\GL$-representations and cohomology}\label{subsec:SLFreps}

We consider as before a finite dimensional $\bb{C}$-vector space $W$ of dimension $\rm{dim}(W)=n$. We say that a $\GL(W)$--representation $M$ is \defi{admissible} if it isomorphic to a (possibly infinite) direct sum
\[M=\bigoplus_{\ll} (S_{\ll}W)^{\oplus a_{\ll}},\] where all the multiplicities are finite ($0\leq a_{\ll}<\infty$ for all $\ll$). $M$ is \defi{finite} if in addition only finitely many of the $a_{\ll}$'s are non-zero. 


The \defi{Grothendieck group} $\G(W)$ of admissible representations is the direct product of (infinitely many) copies of $\bb{Z}$, indexed by the collection $S_{\ll}W$, where $\ll$ runs over the set of dominant weights in $\bb{Z}^n$. We refer to the elements of $\G(W)$ as \defi{virtual representations}. We extend the notation introduced in (\ref{eq:defscpr}) to allow $U$ to be a virtual representation. As a corollary of the Littlewood-Richardson rule \cite[Section~I.9]{macdonald}, if $M$ is an admissible representation and $N$ is finite, then $M\oo N$ is also admissible.

Since any two admissible representations are isomorphic if and only if they coincide in $\G(W)$, we won't make any notational distinction between virtual and usual representations. When $0\to A\to B\to C\to 0$ is a short exact sequence of admissible representations, we have $B=A+C$ in $\G(W)$. Given a finite length complex $\mc{C}^{\bullet}$ of admissible representations, we define its \defi{Euler characteristic} to be the virtual representation given by
\[\chi(\mc{C}^{\bullet})=\sum_{i\in\bb{Z}}(-1)^i\mc{C}^i\in\G(W).\]

\begin{lemma}\label{lem:homiseulerchar}
 The Euler characteristic is preserved by taking homology, i.e. $\chi(\mc{C}^{\bullet})=\chi(H^{\bullet}(\mc{C}^{\bullet}))$. In particular, if $\mc{C}^{\bullet}$ has homology concentrated in a single degree~$i$, then we have the equality in $\G(W)$
\[H^i(\mc{C}^{\bullet})=(-1)^i\cdot\chi(\mc{C}^{\bullet}).\]
\end{lemma}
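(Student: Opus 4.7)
The plan is to run the standard Grothendieck-group telescoping argument, once we check that the relevant subquotients remain admissible. Write $d^i\colon \mc{C}^i \to \mc{C}^{i+1}$ for the differentials, and let $Z^i = \ker(d^i)$, $B^i = \operatorname{im}(d^{i-1})$, and $H^i = Z^i/B^i$. Since any $\GL(W)$-subrepresentation of an admissible representation has all multiplicities bounded by those of the ambient module, it is again admissible; hence $Z^i$, $B^i$, and $H^i$ all have well-defined classes in $\G(W)$. Because $\mc{C}^{\bullet}$ has finite length, all sums appearing below are finite.

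Next I would record the two tautological short exact sequences
\[
 0 \lra Z^i \lra \mc{C}^i \lra B^{i+1} \lra 0, \qquad 0 \lra B^i \lra Z^i \lra H^i \lra 0,
\]
which, by the additivity of classes in $\G(W)$, yield $\mc{C}^i = Z^i + B^{i+1}$ and $Z^i = B^i + H^i$, and hence
\[
 \mc{C}^i = B^i + H^i + B^{i+1} \quad \text{in } \G(W).
\]

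Multiplying by $(-1)^i$ and summing over $i\in\bb{Z}$, the contributions from the boundaries telescope: the sum $\sum_i (-1)^i B^{i+1}$ equals $-\sum_i (-1)^i B^i$ after reindexing, so these two terms cancel and one is left with
\[
 \chi(\mc{C}^{\bullet}) = \sum_{i\in\bb{Z}} (-1)^i \mc{C}^i = \sum_{i\in\bb{Z}} (-1)^i H^i = \chi(H^{\bullet}(\mc{C}^{\bullet})).
\]
For the final assertion, if $H^j(\mc{C}^{\bullet}) = 0$ for all $j \neq i$, then $\chi(H^{\bullet}(\mc{C}^{\bullet})) = (-1)^i H^i(\mc{C}^{\bullet})$, and multiplying the identity above by $(-1)^i$ gives $H^i(\mc{C}^{\bullet}) = (-1)^i \chi(\mc{C}^{\bullet})$.

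The proof is essentially routine; the only point that requires a moment's thought is verifying that kernels, images, and quotients of admissible representations are admissible, so that the alternating sums make sense in $\G(W)$. Once that is in place, the telescoping is immediate.
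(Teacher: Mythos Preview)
Your argument is correct and is exactly the standard telescoping proof; the paper in fact states this lemma without proof, treating it as routine, so there is nothing to compare against beyond noting that your writeup supplies the details the paper omits.
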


Suppose now that $X$ is a projective variety on which the group $\GL(W)$ acts. Assume further that $\mc{M}$ is a quasi-coherent $\GL(W)$-equivariant sheaf on $X$. We say that $\mc{M}$ has \defi{admissible cohomology} if its cohomology groups $H^j(X,\mc{M})$ are admissible for $j=0,\cdots,\dim(X)$. It will be useful to establish the following:

\begin{lemma}\label{lem:SLFgenerizes}
 Let $\mc{M}$ be a $\GL(W)$-equivariant quasi-coherent sheaf admitting a filtration $F_{\bullet}\mc{M}$ which is compatible with the $\GL$-action. If the associated graded $\gr(\mc{M})$ has admissible cohomology then so does~$\mc{M}$. Furthermore, we have an equality of Euler characteristics $\chi(R\pi_*\mc{M})=\chi(R\pi_*\gr(\mc{M}))$, where $\pi:X\to\rm{Spec}~\bb{C}$ denotes the structure morphism.
\end{lemma}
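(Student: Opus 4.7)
The plan is to argue in two steps, admissibility first and then the Euler characteristic identity, in both cases reducing via the short exact sequences $0\to F_{p-1}\mc{M}\to F_p\mc{M}\to\gr_p\mc{M}\to 0$ and exploiting the fact that, since $\GL(W)$ is reductive, taking the $\ll$-isotypic component $U\mapsto U^{\ll}$ is an exact functor on the category of $\GL(W)$-representations.

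For admissibility, I would first treat the case of a finite filtration by induction on its length: the long exact cohomology sequence on $X$, after applying $\scpr{S_{\ll}W}{-}$, gives the bound $\scpr{S_{\ll}W}{H^j(X,F_p\mc{M})}\leq\scpr{S_{\ll}W}{H^j(X,F_{p-1}\mc{M})}+\scpr{S_{\ll}W}{H^j(X,\gr_p\mc{M})}$, and iterating yields finiteness. For a general exhaustive increasing filtration $\mc{M}=\varinjlim_p F_p\mc{M}$, I would invoke the commutation of cohomology with filtered colimits on the Noetherian scheme $X$ to write $H^j(X,\mc{M})=\varinjlim_p H^j(X,F_p\mc{M})$; taking $\ll$-isotypic components commutes with this colimit, and the resulting directed union has each term bounded by the finite quantity $\scpr{S_{\ll}W}{H^j(X,\gr\mc{M})}=\sum_p\scpr{S_{\ll}W}{H^j(X,\gr_p\mc{M})}$, which is finite by hypothesis.

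For the second assertion, Lemma~\ref{lem:homiseulerchar} applied to the long exact sequence associated to each short exact sequence gives the additivity $\chi(R\pi_*F_p\mc{M})=\chi(R\pi_*F_{p-1}\mc{M})+\chi(R\pi_*\gr_p\mc{M})$ in $\G(W)$, whence by induction $\chi(R\pi_*F_p\mc{M})=\sum_{q\leq p}\chi(R\pi_*\gr_q\mc{M})$. Since for each fixed dominant weight $\ll$ the admissibility of $\gr(\mc{M})$-cohomology forces only finitely many $q$ to contribute to the $\ll$-multiplicity on the right, the partial sum stabilizes in $p$; matching coefficient-by-coefficient with $\scpr{S_{\ll}W}{\chi(R\pi_*\mc{M})}=\sum_j(-1)^j\scpr{S_{\ll}W}{H^j(X,\mc{M})}$, computed via the filtered-colimit description of $H^j(X,\mc{M})$, yields the desired equality $\chi(R\pi_*\mc{M})=\chi(R\pi_*\gr(\mc{M}))$ in $\G(W)$.

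The main obstacle is the bookkeeping when the filtration is infinite: both steps rest on interchanging a direct limit with the isotypic-component functor and with sheaf cohomology, and on asserting that the resulting series and directed unions are eventually constant for each fixed $\ll$. The hypothesis that $\gr(\mc{M})$ has admissible cohomology is precisely the input needed to make these finiteness statements uniform in $p$, and once this is in place the reductivity of $\GL(W)$ reduces everything to a weight-by-weight computation in $\G(W)$.
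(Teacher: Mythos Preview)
Your argument is correct and arrives at the same conclusion by what is really the same mechanism, but you unpack it by hand where the paper invokes a single spectral sequence. The paper simply writes down the spectral sequence $E_2^{p,q}=H^{p-q}(X,F_q\mc{M}/F_{q+1}\mc{M})\Rightarrow H^{p-q}(X,\mc{M})$ associated to the filtration; admissibility of $\gr(\mc{M})$-cohomology says each $S_{\ll}W$ has finite total multiplicity on $E_2$, hence on every later page and on $E_{\infty}$, and Lemma~\ref{lem:homiseulerchar} applied page-by-page gives the Euler characteristic identity. Your d\'evissage via the short exact sequences $0\to F_{p-1}\mc{M}\to F_p\mc{M}\to\gr_p\mc{M}\to 0$ together with the passage to a filtered colimit is exactly what the spectral sequence encodes; the one point you leave slightly implicit---that for each fixed $\ll$ the transition maps $H^j(X,F_p\mc{M})^{\ll}\to H^j(X,F_{p+1}\mc{M})^{\ll}$ are eventually isomorphisms (because for $p$ large all $\scpr{S_{\ll}W}{H^{\bullet}(X,\gr_{p+1}\mc{M})}$ vanish)---is precisely the convergence of the spectral sequence. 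The paper's packaging is shorter; yours makes the role of the colimit and the weight-by-weight finiteness more visible.
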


\begin{proof}
 There is a spectral sequence $E_2^{p,q}=H^{p-q}(X,F_q\mc{M}/F_{q+1}\mc{M})\Rightarrow H^{p-q}(X,\mc{M})$. Since $\gr(\mc{M})$ has admissible cohomology, for every dominant weight $\ll\in\bb{Z}^n$ the irreducible representation $S_{\ll}W$ appears with finite (total) multiplicity on the second page of the spectral sequence. The same conclusion must then be true on $E_{\infty}^{p,q}$, so $\mc{M}$ has admissible cohomology. The statement about Euler characteristics follows from Lemma~\ref{lem:homiseulerchar}, since each page $E_{i+1}^{\bullet,\bullet}$ is obtained by taking the homology of some complex whose terms are the terms on the previous page $E_i^{\bullet,\bullet}$.
\end{proof}

Assume now that $X=\bb{P}V$ is a projective space ($V=W^*$). We have

\begin{lemma}\label{lem:SLFcohomologyPV}
 Consider a quasi-coherent $\GL(W)$-equivariant sheaf $\mc{M}$ on $X=\bb{P}V$ which is a direct sum
\[\mc{M}=\bigoplus_{\substack{\mu=(\mu_1\geq\cdots\geq\mu_{n-1})\in\bb{Z}^{n-1} \\ r\in\bb{Z}}} (S_{\mu}\mc{R}\oo\mc{Q}^r)^{\oplus m_{\mu,r}},\]
where $m_{\mu,r}$ are non-negative integers. For any coherent $\mc{N}=\bigoplus_{\mu,r}(S_{\mu}\mc{R}\oo\mc{Q}^r)^{\oplus n_{\mu,r}}$ (i.e. finitely many $n_{\mu,r}$ are non-zero) we have that $\mc{M}\oo\mc{N}$ has admissible cohomology.
\end{lemma}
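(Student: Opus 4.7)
The plan is to decompose $\mc{M}\oo\mc{N}$ as a direct sum $\bigoplus_{\nu,s}(S_\nu\mc{R}\oo\mc{Q}^s)^{\oplus m'_{\nu,s}}$ of twisted Schur functors of $\mc{R}$ with finite multiplicities $m'_{\nu,s}$, and then to apply Bott's Theorem termwise, taking advantage of the fact that sheaf cohomology on the Noetherian scheme $X$ commutes with direct sums of quasi-coherent sheaves.

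First I would use the Littlewood--Richardson rule to write
\[S_\mu\mc{R}\oo S_{\mu'}\mc{R}=\bigoplus_\nu (S_\nu\mc{R})^{\oplus c^\nu_{\mu,\mu'}},\]
where $c^\nu_{\mu,\mu'}$ vanishes unless $|\nu|=|\mu|+|\mu'|$ and $\mu\subseteq\nu$. Combined with $\mc{Q}^r\oo\mc{Q}^{r'}=\mc{Q}^{r+r'}$, this yields a decomposition of $\mc{M}\oo\mc{N}$ into twisted Schur functors with multiplicities
\[m'_{\nu,s}=\sum_{\mu,r,\mu',r'} m_{\mu,r}\,n_{\mu',r'}\,c^\nu_{\mu,\mu'}\,\delta_{r+r',s}.\]
The finiteness of each $m'_{\nu,s}$ follows from two constraints: by coherence of $\mc{N}$ only finitely many $n_{\mu',r'}$ are nonzero, and for each such $(\mu',r')$ the conditions $\mu\subseteq\nu$ and $r=s-r'$ restrict $(\mu,r)$ to a finite set.

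Next, since $X$ is Noetherian and the summands are quasi-coherent, cohomology commutes with the direct sum, giving
\[H^j(X,\mc{M}\oo\mc{N})=\bigoplus_{\nu,s}H^j(X,S_\nu\mc{R}\oo\mc{Q}^s)^{\oplus m'_{\nu,s}}.\]
Bott's Theorem in the form (\ref{eq:Sll=Hj}) says that for each dominant $\ll\in\bb{Z}^n$ and each $l\in\{0,\dots,n-1\}$ there is exactly one pair $(\nu,s)=(\tl{\ll}^{l+1},\ll_{l+1}-l)$ for which $S_\ll V$ appears in $H^l(X,S_\nu\mc{R}\oo\mc{Q}^s)$, and it appears there with multiplicity one. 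Thus at most $n$ pairs $(\nu,s)$ can contribute to the $\ll$-isotypic component of $H^\bullet(X,\mc{M}\oo\mc{N})$, and since each $m'_{\nu,s}$ is finite, every multiplicity $\scpr{S_\ll V}{H^j(X,\mc{M}\oo\mc{N})}$ is finite, proving admissibility.

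The main obstacle is the finiteness of the $m'_{\nu,s}$, which rests on a Littlewood--Richardson containment argument and relies crucially on the coherence hypothesis on $\mc{N}$; without it, both the sum over $(\mu',r')$ and the range of $(\mu,r)$ could become infinite, and admissibility would fail. Beyond this combinatorial point, the argument is a direct assembly of the Littlewood--Richardson rule, the commutation of cohomology with infinite direct sums on Noetherian schemes, and Bott's Theorem.
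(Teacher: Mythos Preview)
Your argument is exactly the expansion the paper has in mind: its proof is the single sentence ``This follows from the Littlewood--Richardson rule and Bott's Theorem~\ref{thm:bott},'' and you have unpacked precisely that. One small correction: the containment $\mu\subseteq\nu$ need not hold when $\mu'$ is a dominant weight that is not a partition (already in rank one, $S_{\mu}\mc{R}\oo S_{-1}\mc{R}=S_{\mu-1}\mc{R}$ gives $\nu=\mu-1<\mu$). The finiteness you need is still true and easy: for fixed $(\nu,\mu')$ one has $c^{\nu}_{\mu,\mu'}=\scpr{S_{\mu}}{S_{\nu}\oo(S_{\mu'})^*}$, and the right-hand side is the multiplicity of $S_{\mu}$ in a finite-dimensional representation, so only finitely many $\mu$ occur. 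With this fix your argument goes through unchanged.
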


\begin{proof}
 This follows from the Littlewood-Richardson rule and Bott's Theorem~\ref{thm:bott}.
\end{proof}

We can define, in analogy to $\G(W)$, the group $\G(X,W)$ as a direct product of copies of $\bb{Z}$ indexed by $S_{\mu}\mc{R}\oo\mc{Q}^r$, with $\mu\in\bb{Z}^{n-1}$ dominant and $r\in\bb{Z}$. Every $\mc{M}$ as in Lemma~\ref{lem:SLFcohomologyPV} gives rise to an element in $\G(X,W)$, and such elements $\mc{M}$ generate the group. We can then define a natural map
\begin{equation}\label{eq:pfwd}
 \G(X,W)  \lra \G(W), \quad\quad \mc{M} \lra \chi(R\pi_*\mc{M}) = \sum_{j=0}^{n-1}(-1)^j H^j(X,\mc{M}).
\end{equation}

%
%

\subsection{$\D$-modules and local cohomology {\cite{borel}, \cite{hottaetal}, \cite{24hrs}}}\label{subsec:loccohDmods}

For a smooth algebraic variety $X$ over $\bb{C}$, we let $\D_X$ denote the sheaf of \defi{differential operators} on $X$ \cite[Section~1.1]{hottaetal}. A \defi{$\D$-module} $\mc{M}$ on $X$ (or a $\D_X$-module) is a quasi-coherent sheaf $\mc{M}$ with a left module action of $\D_X$. A basic example of a $\D$-module is the structure sheaf $\mc{O}_X$. When $X=\bb{C}^N$, $\mc{D}_X=\bb{C}[x_1,\cdots,x_N,\partial_1,\cdots,\partial_N]$ is the \defi{Weyl algebra} of differential operators with polynomial coefficients (where $\partial_i=\partial/\partial x_i$), and $\mc{O}_X=\bb{C}[x_1,\cdots,x_N]$.

All the $\D$-modules $\mc{M}$ that will concern us are going to be \defi{holonomic} \cite[Chapter~3]{hottaetal}, and in particular they will admit a finite composition series $\mc{M}=\mc{M}_0\supset\mc{M}_1\supset\cdots\supset\mc{M}_l=0$, where $\mc{Q}_i=\mc{M}_i/\mc{M}_{i+1}$ is a simple $\D$-module. The $\mc{Q}_i$'s are the \defi{composition factors} of $\mc{M}$ and are uniquely determined (up to reordering). When $X=\Spec\bb{C}$, a holonomic $\D$-module is just a finite dimensional vector space.

If $\pi:Y\lra X$ is a morphism between smooth varieties, there is a pushforward functor between the corresponding (derived) categories of $\D$-modules. Following \cite{hottaetal}, we denote this functor by $\int_{\pi}$ (in \cite{borel}, it is denoted $\pi_+$). By \cite[Theorem~3.2.3]{hottaetal}, $\int_{\pi}$ preserves holonomicity. When $\pi:X\to\Spec\bb{C}$ is the structure morphism, $\int_{\pi}\mc{M}$ is the hypercohomology of the \defi{de Rham complex}
\begin{equation}\label{eq:defdeRham}
dR(\mc{M}):\quad 0\lra\mc{M}\lra\Omega_X^1\oo\mc{M}\lra\cdots\lra\Omega_X^{\dim X}\oo\mc{M}\lra 0, 
\end{equation}
with differential given in local coordinates by $d(\om\oo m)=d\om\oo m+\sum_i (dx_i\wedge\om)\oo\partial_i m$. The hypercohomology groups of $dR(\mc{M})$ are called the \defi{de Rham cohomology groups} of $\mc{M}$, denoted $H^{\bullet}_{dR}(\mc{M})$, and are finite dimensional vector spaces when $\mc{M}$ is holonomic.

If $Y\subset X$ is a closed subvariety, the trivial local system on the smooth locus of $Y$ gives rise via the Riemann-Hilbert correspondence to a simple $\D_X$-module, which we denote by $\mc{L}(Y,X)$ (see \cite[Remark~7.2.10]{hottaetal}). Using the notation in the Introduction, for $Y=Z$ a Veronese cone and $X=\Sym^d W$, we have $\mc{L}(Y,X)=D_0$. If $Y$ is smooth and if we write $s:Y\hookrightarrow X$ for the inclusion map, then $\mc{L}(Y,X)=\int_s\mc{O}_Y$. If $X=\bb{C}^N$ and $Y=\{0\}$ then $\mc{L}(Y,X)=E$ is the $\D$-module discussed in the Introduction. We have
\[
 H^{\bullet}_{dR}(E)=\begin{cases}
\bb{C}, & \bullet=N; \\
0, & \rm{otherwise}.
\end{cases}
\]

\begin{lemma}\label{lem:topdR=0}
 If $M$ is a $\D$-module on $\bb{C}^N$ admitting a surjective map $p:M\to E$ then $H^N_{dR}(M)\neq 0$.
\end{lemma}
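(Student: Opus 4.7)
The plan is to exploit right-exactness of the top de Rham cohomology functor $H^N_{dR}(-)$ on $\D$-modules over $\bb{C}^N$, together with the explicit value $H^N_{dR}(E)=\bb{C}$ recalled just before the lemma. Since $X=\bb{C}^N$ is affine, every quasi-coherent sheaf is $\Gamma$-acyclic, so the hypercohomology of $dR(M)$ from (\ref{eq:defdeRham}) is computed by the cohomology of the complex of global sections. Trivializing $\Omega^N_X$ by the volume form $dx_1\wedge\cdots\wedge dx_N$, the final map of the global de Rham complex is
\[\Omega^{N-1}_X\oo M\lra\Omega^N_X\oo M\cong M,\quad \sum_i(-1)^{i-1}(dx_1\wedge\cdots\wedge\widehat{dx_i}\wedge\cdots\wedge dx_N)\oo m_i\mapsto\sum_i\pd_i m_i,\]
so $H^N_{dR}(M)\cong M\big/\sum_{i=1}^N \pd_i M$.

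From this presentation it is clear that $H^N_{dR}(-)$ is a right-exact functor from $\D$-modules to $\bb{C}$-vector spaces: a $\D$-linear surjection $M\onto N$ induces a surjection of quotients $M/\sum_i\pd_i M\onto N/\sum_i \pd_i N$, because $\pd_i$ commutes with $p$. (Equivalently, the long exact sequence in de Rham cohomology attached to a short exact sequence of $\D$-modules terminates at $H^N_{dR}$.)

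Applying this to the surjection $p:M\onto E$ given by hypothesis yields a surjection
\[H^N_{dR}(M)\onto H^N_{dR}(E)=\bb{C},\]
using the value of $H^N_{dR}(E)$ recalled above. Since the target is nonzero, $H^N_{dR}(M)\neq 0$, as desired. There is no real obstacle here; the entire content is the right-exactness of the top de Rham cohomology functor on affine space, together with the already-known nonvanishing $H^N_{dR}(E)\neq 0$.
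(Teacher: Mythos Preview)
Your proof is correct and is essentially the same as the paper's own argument: both identify $H^N_{dR}(M)$ with $M/\sum_i\pd_i M$ (the paper writes this as $M/\rm{par}(M)$), observe that the surjection $p$ induces a surjection of these quotients, and use $H^N_{dR}(E)=\bb{C}$ to conclude. The paper also mentions the long exact sequence route, just as you do.
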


\begin{proof} This follows by writing down the long exact sequence of de Rham cohomology groups associated to the exact sequence $0\to\ker(p)\to M\to E\to 0$. Concretely, it follows from (\ref{eq:defdeRham}) that $H^N_{dR}(M)=M/\rm{par}(M)$, where $\rm{par}(M)$ is the subspace obtained by applying partial derivatives to the elements of $M$. The surjection $p$ induces a surjection $M/\rm{par}(M)\to E/\rm{par}(E)=\bb{C}$, from which the conclusion follows.
\end{proof}

The most important examples of holonomic $\D$-modules for us will be the \defi{local cohomology modules} $H^{\bullet}_Y(\mc{O}_X)$, where $Y\subset X$ is a closed subset. The local cohomology functor on $\D$-modules is discussed in \cite[Section~1.6]{hottaetal} and \cite[Section~VI.7]{borel}. The $\D$-module composition factors of local cohomology are typically hard to understand, but we have the following starting point:

\begin{proposition}\label{prop:loccohDmod}
 If $s:Y\hookrightarrow X$ is a closed immersion with $Y$ smooth, then $H^{\bullet}_Y(\mc{O}_X)=0$ for $\bullet\neq\rm{codim}(Y)$ and $H^{\rm{codim}(Y)}_Y(\mc{O}_X)=\mc{L}(Y,X)$.

 If $Y\subset X$ is not smooth, then $\mc{L}(Y,X)$ appears as a composition factor of $H^{\rm{codim}(Y)}_Y(\mc{O}_X)$ with multiplicity one. All the other simple factors appearing in the local cohomology modules $H^{\bullet}_Y(\mc{O}_X)$ are supported on the singular locus of $Y$.
\end{proposition}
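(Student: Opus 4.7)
The plan is to treat the smooth case by a direct local computation and then to reduce the singular case to it by restricting to the smooth locus of $Y$. For the smooth case, I would work locally on $X$, choosing coordinates so that $Y = V(x_1,\ldots,x_c)$ is cut out by the first $c = \rm{codim}(Y)$ variables. A standard computation (the \v{C}ech/Koszul description of local cohomology) then yields $H^j_Y(\mc{O}_X) = 0$ for $j \neq c$ and identifies $H^c_Y(\mc{O}_X)$ with the $\D_X$-module pushforward $\int_s \mc{O}_Y$ along the closed immersion $s : Y \hookrightarrow X$. By the definition recorded just before the statement, this equals $\mc{L}(Y, X)$, which settles the smooth case.

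For the singular case, let $j : U = X \setminus Y_{\rm{sing}} \hookrightarrow X$ denote the open complement of the singular locus, so that $Y \cap U$ is smooth of codimension $c$ in $U$. The restriction functor $j^*$ is exact on holonomic $\D$-modules, commutes with taking local cohomology with support in $Y$ (since $Y_{\rm{sing}}$ lies in the closed complement), and sends $\mc{L}(Y, X)$ to $\mc{L}(Y \cap U, U)$, by the defining property of $\mc{L}(Y, X)$ as the minimal extension from the smooth locus under the Riemann--Hilbert correspondence invoked in Section~\ref{subsec:loccohDmods}. Applying $j^*$ to a composition series of $H^c_Y(\mc{O}_X)$ and invoking the smooth case, the result is a composition series of the simple module $\mc{L}(Y \cap U, U)$. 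Hence exactly one composition factor of $H^c_Y(\mc{O}_X)$ survives under $j^*$ (and does so as $\mc{L}(Y \cap U, U)$), while every other composition factor is killed by $j^*$ and therefore has support contained in $Y_{\rm{sing}}$.

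The main subtlety is the final identification: any simple $\D_X$-module whose restriction to $U$ equals $\mc{L}(Y \cap U, U)$ must in fact be $\mc{L}(Y, X)$ itself, by the uniqueness of the minimal (intermediate) extension. This is the most delicate input from the $\D$-module machinery of \cite{hottaetal} and is where the Riemann--Hilbert philosophy does the real work; once granted, the distinguished composition factor of $H^c_Y(\mc{O}_X)$ is $\mc{L}(Y, X)$ and it appears with multiplicity one, which completes the proof.
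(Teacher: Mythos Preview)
Your proposal is correct and follows essentially the same strategy as the paper: establish the smooth case directly, then restrict to the complement of $Y_{\rm{sing}}$ to reduce the singular case to the smooth one. The paper handles the smooth case more tersely by citing the functorial identity $H^{\bullet}_Y(\mc{O}_X)=\int_s s^{\dag}(\mc{O}_X)$ from \cite[Proposition~1.7.1(iii)]{hottaetal} (together with $s^{\dag}\mc{O}_X=\mc{O}_Y$) rather than a local \v{C}ech/Koszul computation, but the content is the same. In the singular case your argument is in fact more explicit than the paper's: you spell out the uniqueness-of-minimal-extension step that identifies the surviving composition factor with $\mc{L}(Y,X)$, whereas the paper leaves this implicit.
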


\begin{proof} Assume first that $Y$ is smooth. By \cite[Proposition~1.7.1(iii)]{hottaetal} or \cite[Theorem~7.13(ii)]{borel}
\[H^{\bullet}_Y(\mc{O}_X)=\int_s s^{\dag}(\mc{O}_X),\]
where $s^{\dag}$ denotes the (shifted) inverse image functor. Since $s^{\dag}\mc{O}_X=\mc{O}_Y$ and $\int_s\mc{O}_Y=\mc{L}(Y,X)$, the conclusion follows.

If $Y$ is not smooth, then it follows from the previous discussion that away from the singular locus of $Y$, $H^{\bullet}_Y(\mc{O}_X)$ coincides with $\mc{L}(Y,X)$ in degree $\bullet=\rm{codim}(Y)$ and it vanishes otherwise. This means that the remaining composition factors of local cohomology are supported on the singular locus of $Y$.
\end{proof}

Specializing Proposition~\ref{prop:loccohDmod} to the case when $X=\Sym^d W$ and $Y=Z$ is the degree $d$ Veronese cone, it follows that $D_0$ appears as a composition factor of $H_Z^{\rm{codim}(Z)}(S)$ with multiplicity one, and the remaining composition factors (if any) of the local cohomology modules are supported at $0$, the vertex of the cone. By \cite[Example~1.6.4]{hottaetal}, the only simple $\D$-module supported at $0$ is the $\D$-module $E$ discussed before.

\subsection{Pushing forward $\D$-modules}\label{subsec:pfwd}

Consider a smooth projective variety $X$, a finite dimensional vector space $U$, and a short exact sequence
\begin{equation}\label{eq:basicses}
0\lra\xi\lra U\oo\mc{O}_X\lra\eta\lra 0, 
\end{equation}
where $\xi,\eta$ are locally free sheaves on $X$. We have a diagram
\begin{equation}\label{eq:diagrGeneric}
\xymatrix{
Y=\rm{Tot}(\eta^*) \ar@{^{(}->}[r]^{s} \ar[dr]_{\pi} & U^*\times X \ar[d]^{p} \\
 & U^* \\
}
\end{equation}
where $Y$ is the total space of the bundle $\eta^*$. All the $\D$-modules that we'll be concerned with will be assumed to be holonomic (even though some of the results below hold more generally). In particular, our modules will admit good filtrations in the sense of \cite[Section~2.1]{hottaetal} or \cite[Section~II.4]{borel}. We will be interested in understanding the (derived) $\D$-module pushforward $\int_{\pi}\mc{M}$ of a $\D_Y$-module $\mc{M}$ along the map $\pi$.

\begin{proposition}\label{prop:grIntM}
 Let $\mc{M}$ be a $\D_Y$-module with a good filtration. There exists a filtration on $\int_s\mc{M}$, such that the associated graded sheaves of $\mc{M}$ and $\int_s\mc{M}$ are related by
 \[\gr\left(\int_s\mc{M}\right)=\gr(\mc{M})\oo\det(\xi^*)\oo\Sym(\xi^*).\]
\end{proposition}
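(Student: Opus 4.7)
The strategy is to compute $\int_s\mc{M}$ using the standard formula for the $\D$-module direct image along a closed immersion via the left transfer module, being careful to track the canonical twist by the relative dualizing sheaf $\om_{Y/X'}$, which is precisely the source of the determinantal factor in the claimed formula.

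Recall that $\int_s\mc{M}=s_*\bigl(\D_{X'\leftarrow Y}\oo^L_{\D_Y}\mc{M}\bigr)$, where the transfer module is $\D_{X'\leftarrow Y}=\om_{Y/X'}\oo_{\mc{O}_Y}s^*\D_{X'}$, and where the adjunction formula applied to the short exact sequence $0\to T_Y\to s^*T_{X'}\to N_{Y/X'}\to 0$ gives $\om_{Y/X'}=\om_Y\oo s^*\om_{X'}^{-1}\simeq\det(N_{Y/X'})$. Working Zariski-locally over an open $U\subset X$ on which $\eta^*$ trivializes, I would pick fiber coordinates $z_1,\ldots,z_c$ (with $c=\rank(\xi)$) so that $Y|_U$ is cut out inside $U^*\times U$ by the regular sequence $z_1,\ldots,z_c$, and $N_{Y/X'}=\pi^*\xi^*$. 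In these coordinates $s^*\D_{X'}$ is free as a left $\D_Y$-module on the monomials $\partial_z^{\alpha}$, so the derived tensor product collapses to an ordinary one:
\[s^*\D_{X'}\oo^L_{\D_Y}\mc{M}\ =\ \mc{M}\oo_{\mc{O}_Y}\Sym(N_{Y/X'}).\]

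Next, I would equip $\D_{X'\leftarrow Y}$ with the order filtration inherited from $\D_{X'}$ and combine it with the good filtration $F_{\bullet}\mc{M}$ to obtain a filtration on $\int_s\mc{M}$. The associated graded is then computed directly from the local description above, using that the order-filtration on $\Sym(s^*T_{X'})$ respects the decomposition coming from $0\to T_Y\to s^*T_{X'}\to N_{Y/X'}\to 0$ and contracts against $\gr\D_Y=\Sym T_Y$:
\[\gr\!\left(\int_s\mc{M}\right)\ =\ \gr(\mc{M})\oo\om_{Y/X'}\oo\Sym(N_{Y/X'})\ =\ \gr(\mc{M})\oo\det(\xi^*)\oo\Sym(\xi^*).\]

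The main point requiring care is verifying that this locally defined filtration is independent of the chosen trivialization and is compatible with the $\GL(W)$-equivariant structure on $\int_s\mc{M}$. This should follow from the canonical, coordinate-free nature of the transfer module $\D_{X'\leftarrow Y}$ and of the good filtration, together with the naturality of the order filtration on $\D_{X'}$; but some bookkeeping is needed to confirm that the patching preserves the associated graded computed above.
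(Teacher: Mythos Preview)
Your proposal is correct and follows essentially the same approach as the paper: both arguments compute $\int_s\mc{M}$ via the transfer module $\D_{X'\leftarrow Y}=(\D_{X'}/\mc{J}\D_{X'})\oo\om_{Y/X'}$, identify $\om_{Y/X'}$ with $\det(\xi^*)$ via the normal bundle, pass to associated graded using the order filtration combined with the good filtration on $\mc{M}$, and then invoke a second filtration coming from the normal sequence $0\to\mc{T}_Y\to\mc{T}_{X'}|_Y\to\xi^*\to 0$ to split $\Sym(\mc{T}_{X'}|_Y)$ over $\Sym(\mc{T}_Y)$. The only difference is cosmetic: the paper phrases the computation globally in terms of $\mc{O}_{T^*X'|_Y}$ as a module over $\mc{O}_{T^*Y}$, whereas you trivialize locally and write things in coordinates $\partial_z^{\alpha}$; and the paper simply notes at the end that the two filtrations combine into ``a possibly different filtration of $\int_s\mc{M}$'' rather than discussing gluing or equivariance, which is not part of the proposition as stated.
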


\begin{proof}
 Let $X'=U^*\times X$, and denote by $\mc{J}$ the ideal sheaf of $Y\subset X'$ which is generated by $\xi$ inside $\mc{O}_{X'}=\Sym_{\mc{O}_X}U$. The relative canonical sheaf $\om_{Y/X'}=\det(\mc{J}/\mc{J}^2)^*=\det(\xi^*)$ (or the pullback of $\det(\xi^*)$ from $X$ to $X'$ to be precise) and we have (see \cite[Section~VI.7]{borel})
\[\int_s\mc{M}=\D_{X'\leftarrow Y}\oo_{\D_Y}\mc{M},\]
where $\D_{X'\leftarrow Y}=(\D_{X'}/\mc{J}\D_{X'})\oo\om_{Y/X'}$. Consider the good filtration on $\mc{M}$ and the canonical filtrations on $\D_Y$ and $\D_{X'}$ (defined by the order of differential operators). We have $\gr(\D_Y)=\mc{O}_{T^*Y}$, where $T^*Y$ denotes (the total space of) the cotangent bundle on $Y$, and similarly $\gr(\D_{X'})=\mc{O}_{T^*X'}$. It follows that
\begin{equation}\label{eq:1stgrintM}
\begin{aligned}
\gr\left(\int_s\mc{M}\right)&=\gr(\D_{X'\leftarrow Y})\oo_{\gr(\D_Y)}\gr(\mc{M})=(\mc{O}_{T^*X'}/\mc{J}\mc{O}_{T^*X'})\oo_{\mc{O}_{T^*Y}}\gr(\mc{M})\oo\det(\xi^*)\\
&=\mc{O}_{T^*X'|_Y}\oo_{\mc{O}_{T^*Y}}\gr(\mc{M})\oo\det(\xi^*). 
\end{aligned}
\end{equation}
Since $(\xi)^*=(\mc{J}/\mc{J}^2)^*$ is the normal sheaf of the inclusion $Y\hookrightarrow X'$, we have an exact sequence
\begin{equation}\label{eq:ctgseq}
0\lra\mc{T}_Y\lra\mc{T}_{X'}|_Y\lra\xi^*\lra 0, 
\end{equation}
where $\mc{T}$ denotes the tangent sheaf. We have that $\mc{O}_{T^*X'|_Y}=\Sym_{\mc{O}_Y}(\mc{T}_{X'}|_Y)$ is locally a free module over $\mc{O}_{T^*Y}=\Sym_{\mc{O}_Y}(\mc{T}_Y)$. To make this global, we can then use the filtration  of $\mc{O}_{T^*X'|_Y}$ induced by (\ref{eq:ctgseq}) which yields the associated graded $\gr(\mc{O}_{T^*X'|_Y})=\Sym_{\mc{O}_{T^*Y}}(\xi^*)$. Combining this with (\ref{eq:1stgrintM}) we get (for a possibly different filtration of $\int_s\mc{M}$) that
\[\gr\left(\int_s\mc{M}\right)=\gr(\mc{M})\oo\det(\xi^*)\oo\Sym(\xi^*).\qedhere\]
\end{proof}

\begin{proposition}[{\cite[Cor.~5.3.2]{borel}}]\label{prop:pfwdprojection}
 If $\mc{N}$ is a $\D$-module on $U^*\times X$, then $\int_p\mc{N}=Rp_*(\om_X(\mc{N}))[d_X]$, where $d_X$ is the dimension of $X$, $[d_X]$ denotes the cohomological shift by $d_X$, and $\om_X(\mc{N})$ denotes the (relative) de Rham complex
\[\om_X(\mc{N}):\quad\quad 0\lra\mc{N}\lra\Omega^1_X\oo\mc{N}\lra\cdots\lra\Omega^{d_X}_X\oo\mc{N}\lra 0,\]
with $\mc{N}$ situated in cohomological degree $0$.
\end{proposition}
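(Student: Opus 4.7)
The plan is to derive the formula by unwinding the definition of the $\D$-module pushforward and applying a locally free resolution of the transfer bimodule. By definition of the derived pushforward,
\[
\int_p \mc{N} = Rp_*\bigl( \D_{U^* \leftarrow U^*\times X} \oo^L_{\D_{U^*\times X}} \mc{N} \bigr),
\]
so it suffices to identify this derived tensor product with $\om_X(\mc{N})[d_X]$ as a complex of $p^{-1}\D_{U^*}$-modules on $U^*\times X$. Since $p$ is the trivial smooth projection with fiber the smooth variety $X$, the relative tangent sheaf satisfies $\mc{T}_{U^*\times X/U^*} \simeq q^*\mc{T}_X$, where $q:U^*\times X \to X$ is the projection onto $X$.

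The core step is to resolve the transfer bimodule $\D_{U^*\leftarrow U^*\times X}$ as a right $\D_{U^*\times X}$-module by the relative Spencer complex whose terms are $\D_{U^*\times X}\oo_{\mc{O}}\bw^{\bullet}\mc{T}_{U^*\times X/U^*}$, with differential given by the standard Koszul contraction against relative vector fields. I would verify by a local coordinate computation adapted to the product structure $U^*\times X$ that this complex is acyclic in positive degrees and (after a shift by $d_X$) resolves $\D_{U^*\leftarrow U^*\times X}$, exactly as in the absolute case treated in \cite[Chapter~1]{hottaetal}.

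Tensoring this resolution with $\mc{N}$ over $\D_{U^*\times X}$ collapses the $i$-th term to $\bw^i\mc{T}_{U^*\times X/U^*}\oo\mc{N}$; after dualizing via the pairing with relative differential forms and absorbing the relative canonical bundle built into the transfer module, the complex becomes precisely $\om_X(\mc{N})[d_X]$ as defined in the statement, with the Spencer differential identified with the de Rham differential through the action of vector fields on $\mc{N}$. Applying $Rp_*$ to this identification then yields the desired formula.

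The main technical obstacle is the careful bookkeeping of sign conventions, cohomological shifts, and left-versus-right module structures when passing between the Spencer and de Rham presentations, together with the identification of the relative canonical bundle $\om_{U^*\times X/U^*}$ with the pullback $q^*\om_X$. In essence, this is the relative version of the classical identification $\int_\pi \mc{O}_X = H^{\bullet - d_X}_{dR}(X)$ for the structure morphism $\pi: X \to \Spec\bb{C}$, and since the Spencer resolution is compatible with base change along $U^*$, the general case reduces to this pointwise one.
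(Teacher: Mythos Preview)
The paper does not actually prove this proposition: it is stated with the attribution \cite[Cor.~5.3.2]{borel} and used as a black box, so there is no ``paper's own proof'' to compare against. Your sketch is the standard argument one finds in the cited reference (and in \cite[Chapter~1]{hottaetal}): resolve the transfer bimodule $\D_{U^*\leftarrow U^*\times X}$ by the relative Spencer complex, tensor with $\mc{N}$, and identify the result with the relative de Rham complex after the appropriate shift and twist by the relative canonical bundle. The outline is correct and the obstacles you flag (signs, shifts, left/right structures) are exactly the bookkeeping that has to be done; for the purposes of this paper there is nothing to add beyond the citation.
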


\begin{corollary}\label{cor:GrothClassPFwd} Suppose that $U$ is a $\GL(W)$-representation, that $X$ admits an action of $\GL(W)$ and that (\ref{eq:basicses}) is an exact sequence of $\GL(W)$-equivariant vector bundles. Assume further that for some good filtration on $\mc{M}$ and for $i=0,\cdots,d_X$, the sheaves $\Omega^i_X\oo\gr(\mc{M})\oo\det(\xi^*)\oo\Sym(\xi^*)$ have admissible cohomology. We have the following equality in $\G(W)$:
\[\chi\left(\int_{\pi}\mc{M}\right)=\sum_{i=0}^{d_X} (-1)^{d_X-i}\cdot\chi(R\pi_*(\Omega^i_X\oo\gr(\mc{M})\oo\det(\xi^*)\oo\Sym(\xi^*))).\]
\end{corollary}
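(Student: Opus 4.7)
The plan is to factor $\pi=p\circ s$ via diagram~(\ref{eq:diagrGeneric}) and reduce the claim to a combination of the three tools already developed. By functoriality of the $\D$-module pushforward, $\int_{\pi}\mc{M}=\int_p\int_s\mc{M}$. First I would apply Proposition~\ref{prop:pfwdprojection}, which identifies $\int_p(\cdot)$ with the shifted $Rp_*$ of the relative de Rham complex. Taking Euler characteristics, and accounting for both the cohomological shift $[d_X]$ and the alternating sign from the de Rham complex, produces an expression of the form
\[
\chi\!\left(\int_{\pi}\mc{M}\right) \;=\; \sum_{i=0}^{d_X}(-1)^{d_X-i}\,\chi\!\bigl(Rp_{*}(\Omega^i_X\otimes\textstyle\int_s\mc{M})\bigr),
\]
which already has the correct sign structure to match the statement.

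Next, Proposition~\ref{prop:grIntM} equips $\int_s\mc{M}$ with a $\GL(W)$-compatible filtration whose associated graded is $\gr(\mc{M})\otimes\det(\xi^*)\otimes\Sym(\xi^*)$. Tensoring with the locally free sheaf $\Omega^i_X$ preserves the filtration, and its associated graded is $\Omega^i_X\otimes\gr(\mc{M})\otimes\det(\xi^*)\otimes\Sym(\xi^*)$; by hypothesis this has admissible cohomology. Lemma~\ref{lem:SLFgenerizes} then allows us to replace $\int_s\mc{M}$ by its associated graded when computing Euler characteristics, and since $s$ is a closed immersion one has $R\pi_{*}=Rp_{*}\circ s_{*}$, giving precisely the right-hand side of the claim.

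The only real obstacle is hypothesis-checking rather than conceptual: one must verify that Lemma~\ref{lem:SLFgenerizes}, stated there for a projective $X$ mapping to $\Spec\bb{C}$, applies equally well to the pushforward $p:U^{*}\times X\to U^{*}$ relevant here. The spectral sequence argument proving the lemma transports without change, because the admissibility assumption is imposed directly on the associated graded of each $\Omega^i_X\otimes\int_s\mc{M}$, and the $U^*$-factor contributes only a $\GL(W)$-trivial polynomial ring of coefficients. With that routine check in hand, the corollary is a mechanical assembly of Propositions~\ref{prop:grIntM} and~\ref{prop:pfwdprojection} together with Lemma~\ref{lem:SLFgenerizes}.
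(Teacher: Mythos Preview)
Your proposal is correct and follows essentially the same route as the paper: factor $\pi=p\circ s$, use Proposition~\ref{prop:grIntM} to control $\int_s\mc{M}$ via its associated graded, use Proposition~\ref{prop:pfwdprojection} to identify $\int_p$ with the shifted derived pushforward of the relative de Rham complex, and invoke Lemma~\ref{lem:SLFgenerizes} to pass to the associated graded at the level of Euler characteristics. Your observation that Lemma~\ref{lem:SLFgenerizes} is literally stated for the structure map to $\Spec\bb{C}$ rather than for $p:U^*\times X\to U^*$ is a fair point, and your resolution (the spectral-sequence argument carries over verbatim since $U^*$ is affine and contributes nothing new to admissibility) is exactly the routine check the paper leaves implicit.
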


\begin{proof}
 Let $\mc{N}=\int_s\mc{M}$ be the $\D$-module push-forward of $\mc{M}$ along the inclusion map $s$. It follows from Proposition~\ref{prop:grIntM} and Corollary~\ref{lem:SLFgenerizes} that the sheaves $\Omega_X^i\oo\mc{N}$ have admissible cohomology. We get from Proposition~\ref{prop:pfwdprojection} that the $\D$-module pushforward of $\mc{N}$ along $p$ is represented by a complex of admissible representations. Since $\int_p\mc{N}=\int_{\pi}\mc{M}$ the desired conclusion follows by taking Euler characteristics.
\end{proof}

\subsection{The modules $M_{\ll}$ and $\Ext$}\label{subsec:Mll}

Let $S=\Sym(\Sym^d V)$ with its natural $\GL$-action, and write $\d=(d,0,\cdots,0)$ as in Section~\ref{subsec:stablemults}. The $S$-modules $M_{\ll}$ introduced in the next lemma will play an essential role in the calculation of local cohomology in Section~\ref{sec:loccohVero}.

\begin{lemma}\label{lem:defMll}
Fix a partition $\ll$. There is a unique (up to isomorphism) $\GL$-equivariant $S$-module $M_{\ll}$ with the properties
\begin{enumerate}
 \item[(a)] As a $\GL$-representation, $M_{\ll}$ has a decomposition
\[M_{\ll}=\bigoplus_{k\geq 0}S_{\ll+k\d}V.\]
 \item[(b)] $M_{\ll}$ is generated as an $S$-module by the $\ll$-isotypic component $S_{\ll}V$.
\end{enumerate}
Assume now that for some $p>0$, we have a $\GL$-equivariant $S$-module $N$ which is isomorphic as a $\GL$-representation to $M_{\ll}^{\oplus p}$. If $N$ is generated as an $S$-module by its $\ll$-isotypic component (i.e. by the subrepresentation $(S_{\ll}V)^{\oplus p}$) then $N$ is isomorphic as an $S$-module to $M_{\ll}^{\oplus p}$.
\end{lemma}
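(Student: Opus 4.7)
The plan is to begin by establishing the key representation-theoretic fact that the multiplicity of $S_{\ll+k\d}V$ in the free $S$-module $S\oo S_\ll V$ equals exactly one for every $k\geq 0$. This is a Littlewood--Richardson computation: the coefficient $c^{\ll+k\d}_{\nu,\ll}$ counts Littlewood--Richardson tableaux of skew shape $(\ll+k\d)/\ll$, which is a single row of $kd$ cells, and the Yamanouchi (lattice-word) condition forces every entry to equal $1$, hence the content must be $\nu=k\d$. Combined with $s_{k\d}=1$ (the top Schur functor $S_{k\d}V=\Sym^{kd}V$ appears with multiplicity one in $\Sym^k(\Sym^d V)$), this yields $\scpr{S_{\ll+k\d}V}{S\oo S_\ll V}=1$.

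For existence, I will realize $M_\ll$ inside a natural ambient $\GL(W)$-equivariant $S$-module. Let $\ol\ll=(\ll_2,\ldots,\ll_n,0)\in\bb{Z}^n$, and consider $\Sym V \oo S_{\ol\ll}V$ equipped with the $S$-module structure coming from the inclusion $\Sym^d V \hookrightarrow \Sym V$ acting on the first tensor factor. By Pieri, $S_{\ll+k\d}V$ embeds with multiplicity one in $\Sym^{\ll_1+kd}V \oo S_{\ol\ll}V$: the skew shape $(\ll+k\d)/\ol\ll$ is the unique horizontal strip of size $\ll_1+kd$, with row $i$ cells in columns $\ll_{i+1}+1,\ldots,\ll_i$ for $i\geq 2$ and row $1$ extending to column $\ll_1+kd$. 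Define $B^\ll\subset \Sym^{\ll_1}V \oo S_{\ol\ll}V$ to be this unique copy of $S_\ll V$, and set $M_\ll := S\cdot B^\ll$. The essential Pieri check is that for any $k\geq 0$, among the Pieri summands $S_{\ll+k\d+\eta}V$ of $\Sym^d V \oo S_{\ll+k\d}V$, only $\eta=\d=(d,0,\ldots,0)$ yields a skew shape $(\ll+k\d+\eta)/\ol\ll$ that is a horizontal strip: the disjointness of consecutive row supports in the ambient forces $\eta_{i+1}=0$ for all $i\geq 1$. Together with the torsion-freeness of $\Sym V \oo S_{\ol\ll}V$ over $\Sym V$ (which rules out the map being zero), this gives $\Sym^d V \cdot B^{\ll+k\d}=B^{\ll+(k+1)\d}$, and iterating produces $M_\ll=\bigoplus_{k\geq 0}S_{\ll+k\d}V$ satisfying properties (a) and (b).

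For uniqueness and the $N$-statement, treated uniformly: let $N$ be a $\GL(W)$-equivariant $S$-module with $N\cong M_\ll^{\oplus p}$ as $\GL$-representations and generated as an $S$-module by its $\ll$-isotypic $(S_\ll V)^{\oplus p}$. The canonical surjection $\psi:S\oo (S_\ll V)^{\oplus p}\onto N$ has kernel $K_N$ whose $\GL$-decomposition is forced by the key fact: for $\mu=\ll+k\d$ the multiplicity on both sides equals $p$, so $K_N$ vanishes in the $S_{\ll+k\d}V$-isotypic; for $\mu\notin\{\ll+k\d:k\geq 0\}$ the target has zero multiplicity, so $K_N$ contains the entire $S_\mu V$-isotypic of the source. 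Since each isotypic contribution is either $0$ or the whole isotypic, $K_N$ is uniquely pinned down as a subspace of $S\oo (S_\ll V)^{\oplus p}$, and the identical description applies verbatim to the kernel of $S\oo (S_\ll V)^{\oplus p}\onto M_\ll^{\oplus p}$ (which is $S$-stable by the existence construction); the two kernels therefore coincide, yielding $N\cong M_\ll^{\oplus p}$ as $S$-modules, and specializing to $p=1$ recovers the uniqueness of $M_\ll$. The main obstacle is the Pieri bookkeeping in the existence step: verifying that the $S$-action on $B^\ll$ never produces unwanted isotypic components inside $\Sym V \oo S_{\ol\ll}V$ relies on the horizontal-strip argument pinpointing $\eta=\d$ as the only admissible Pieri summand.
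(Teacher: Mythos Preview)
Your proof is correct, and the uniqueness/$N$-statement portion matches the paper's argument essentially verbatim: both observe that the surjection $S\otimes(S_{\ll}V)^{\oplus p}\twoheadrightarrow N$ has kernel forced, as a $\GL$-subrepresentation, to equal the sum of the $\mu$-isotypic components for $\mu\notin\{\ll+k\d:k\geq 0\}$, because the Littlewood--Richardson computation gives $\scpr{S_{\ll+k\d}V}{S\otimes S_{\ll}V}=1$.

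Where you diverge is in the existence step. The paper's proof is brief to the point of leaving existence implicit: it argues that any $M_{\ll}$ satisfying (a) and (b) must be isomorphic to $(S_{\ll}V\otimes S)/K$, but does not pause to verify that the subrepresentation $K$ so described is in fact an $S$-submodule (this is the straightforward check that if $\mu\neq\ll+k\d$ for all $k$ and $\nu/\mu$ is a horizontal $d$-strip, then also $\nu\neq\ll+k\d$). You instead give a genuinely different and self-contained construction, realizing $M_{\ll}$ as the $S$-submodule of $\Sym V\otimes S_{\ol{\ll}}V$ generated by the unique Pieri copy of $S_{\ll}V$; your horizontal-strip argument showing $\eta=\d$ is the only admissible Pieri summand is the analogue of the $S$-stability check the paper omits. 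Your approach has the virtue of being fully explicit and of foreshadowing the geometric realization $M_{\ll}=H^0(\bb{P}V,S_{\ol{\ll}}\mc{R}\otimes\mc{Q}^{\ll_1}\otimes\Sym(\mc{Q}^d))$ used in Section~\ref{subsec:Mll}; the paper's quotient description, once made rigorous, has the advantage of requiring no auxiliary ambient module.
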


\begin{proof}
 By (b), there is a surjective homomorphism $\pi:S_{\ll}V\oo S\lra M_{\ll}$, which is $\GL$-equivariant. Using the Littlewood-Richardson rule, we get for $k\geq 0$
\[\scpr{S_{\ll+k\d}V}{S_{\ll}V\oo S}=1\overset{(a)}{=}\scpr{S_{\ll+k\d}V}{M_{\ll}},\]
so the kernel $K$ of $\pi$ is the sum of the $\mu$-isotypic components of $S_{\ll}V\oo S$ corresponding to partitions $\mu\neq\ll+k\d$ for all $k$. It follows that $M_{\ll}\simeq(S_{\ll}V\oo S)/K$ is determined by properties (a) and (b).

The proof that $N$ is isomorphic to $M_{\ll}^{\oplus p}$ is identical to the argument given for the uniqueness of $M_{\ll}$.
\end{proof}

Our next goal is to compute the $\Ext$ modules $\Ext^{\bullet}_S(M_{\ll},S)$, and describe their $\GL$-equivariant structure. To do so, we will realize $M_{\ll}$ as the global sections of a locally free sheaf on projective space, and use the duality theorem \cite[Theorem~3.1]{raicu-weyman-witt} and Bott's Theorem~\ref{thm:bott} to compute the $\Ext$ modules.

Let $X=\bb{P}V$ be as in Section~\ref{subsec:bott}, with the tautological sequence (\ref{eq:tautPV}), and define $\xi$ to be the kernel of the natural map $\Sym^d V\oo\mc{O}_X\lra\mc{Q}^d$. For each partition $\ll$, let $\ol{\ll}=(\ll_2,\cdots,\ll_n)$ be the partition obtained by removing its largest part, and define
\[\mc{M}_{\ll}=S_{\ol{\ll}}\mc{R}\oo\mc{Q}^{\ll_1}\oo\Sym(\mc{Q}^d),\]
\[\mc{M}^*_{\ll}=S_{\ol{\ll}}\mc{R}\oo\mc{Q}^{\ll_1}\oo\det(\xi)\oo\Sym(\mc{Q}^{-d}).\]
By Bott's theorem and Lemma~\ref{lem:defMll}, we have $H^0(X,\mc{M}_{\ll})=M_{\ll}$, $H^j(X,\mc{M}_{\ll})=0$ for $j>0$. It follows from \cite[Theorem~3.1]{raicu-weyman-witt} that

\begin{proposition}\label{prop:ExtMll}
 If we let $n_d={n+d-1\choose d}-n$, then we have for every $j\in\bb{Z}$ and every partition $\mu$
\begin{equation}\label{eq:Ext=H}
\Ext^{n_d+j}_S(M_{\mu},S)=H^{n-1-j}(X,\mc{M}^*_{\mu})^*. 
\end{equation}
Assume that $\ll$ is a dominant weight, $\mu$ is a partition, and that $|\ll|\equiv|\mu|\equiv 0\ (\rm{mod }d)$. If $u_d$ is as in (\ref{eq:defud}), then setting $\ol{\mu}=(\mu_2,\cdots,\mu_n)$ and writing $\ll^i$ as in (\ref{eq:defll^i}), we get
\begin{itemize}
 \item[(i)] The irreducible representation $\det(\Sym^d W)=S_{(u_d^n)}W$ occurs in $\Ext^{\bullet}_S(M_{\mu},S)$ only if $\mu$ is a hook partition of size greater than zero (i.e. $\mu_2\leq 1\leq\mu_1$).
 \item[(ii)] For $j=0,1,\cdots,n-2$,
\[\scpr{S_{\ll}W}{\Ext^{n_d+j}(M_{\mu},S)}=\begin{cases}
 1, & \rm{if }\ol{\mu}=\ll^{n-j}.\\
 0, & \rm{otherwise}.
\end{cases}
\]
 \item[(iii)] For $j=n-1$,
\[\scpr{S_{\ll}W}{\Ext^{n_d+(n-1)}(M_{\mu},S)}=\begin{cases}
 1, & \rm{if }\ol{\mu}=\ll^{1}\rm{ and }\mu_1-\ll_1+u_d>0.\\
 0, & \rm{otherwise}.
\end{cases}
\]
\end{itemize}
\end{proposition}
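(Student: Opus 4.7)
The plan is to take the duality isomorphism (\ref{eq:Ext=H}) as input from \cite[Theorem~3.1]{raicu-weyman-witt}, and then to reduce statements (i), (ii), (iii) to explicit applications of Bott's Theorem~\ref{thm:bott} on $X=\bb{P}V$.

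First I would make $\mc{M}^*_{\mu}$ concrete. Taking top exterior powers in the defining sequence $0\to\xi\to\Sym^d V\oo\mc{O}_X\to\mc{Q}^d\to 0$ yields $\det(\xi)=S_{(u_d^n)}V\oo\mc{Q}^{-d}$, and hence
\[
\mc{M}^*_{\mu} \;=\; S_{(u_d^n)}V\oo\bigoplus_{k\geq 1} S_{\ol{\mu}}\mc{R}\oo\mc{Q}^{\mu_1-dk}.
\]
Since the graded dual of an admissible $\GL$-representation satisfies $\scpr{S_{\ll}W}{U^*}=\scpr{S_{\ll}V}{U}$, the duality (\ref{eq:Ext=H}) becomes
\[
\scpr{S_{\ll}W}{\Ext^{n_d+j}_S(M_{\mu},S)} \;=\; \scpr{S_{\ll}V}{H^{n-1-j}(X,\mc{M}^*_{\mu})}.
\]
Applying (\ref{eq:Sll=Hj}) to the $k$-th summand with $l=n-1-j$, the isotypic component $S_{\ll-(u_d^n)}V$ appears in $H^l$ precisely when $\ol{\mu}=\tl{(\ll-(u_d^n))}^{l+1}=\ll^{l+1}$, in which case $k$ is uniquely determined by the equation $\mu_1-dk=\ll_{l+1}-u_d-l$.

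For (ii) I would set $l+1=n-j\geq 2$ and solve for $dk$, obtaining the clean identity $dk=|\mu|-|\ll|+Nd$, which is automatically a multiple of $d$ thanks to the congruences $|\mu|\equiv|\ll|\equiv 0\pmod{d}$. The crucial step is verifying that $k\geq 1$ is automatic in case (ii): the partition inequality $\mu_1\geq\ol{\mu}_1=\ll_1+1-u_d$ together with dominance $\ll_1\geq\ll_{n-j}$ gives $dk\geq n-j\geq 2>0$, and a positive multiple of $d$ must be at least $d$. Running the same computation with $l=0$ gives (iii): now $dk=\mu_1-\ll_1+u_d$, and the same divisibility argument shows that $k\geq 1$ is equivalent to $dk>0$, producing exactly the condition $\mu_1-\ll_1+u_d>0$.

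For (i) one specializes $\ll=(u_d^n)$, so that the multiplicity of $S_{(u_d^n)}W$ in $\Ext^{\bullet}$ becomes the total multiplicity of the trivial representation $S_{(0^n)}V$ in $\bigoplus_{k\geq 1,\,l} H^l(X,S_{\ol{\mu}}\mc{R}\oo\mc{Q}^{\mu_1-dk})$. Bott's recipe forces $\ol{\mu}=\tl{(0^n)}^{l+1}=(1^l,0^{n-1-l})$ and $\mu_1=dk-l$ for some $l\in\{0,\ldots,n-1\}$ and $k\geq 1$, so $\mu$ must be a hook partition of positive size $|\mu|=dk>0$. The main obstacle I anticipate is not conceptual but a matter of careful bookkeeping: keeping the $W/V$-duality and the $(u_d^n)$-twist straight through (\ref{eq:Ext=H}) and Bott, together with the modest divisibility-plus-strict-positivity argument that distinguishes why $k\geq 1$ comes for free in case (ii) but must be imposed by hand in case (iii).
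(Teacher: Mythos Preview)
Your proposal is correct and follows essentially the same route as the paper: invoke \cite[Theorem~3.1]{raicu-weyman-witt} for (\ref{eq:Ext=H}), rewrite $\mc{M}^*_{\mu}$ via $\det(\xi)=S_{(u_d^n)}V\oo\mc{Q}^{-d}$, and apply (\ref{eq:Sll=Hj}) summand-by-summand to pin down $\ol{\mu}=\ll^{n-j}$ and the unique $k$; the paper records $k=(\mu_1-\ll_{n-j}+u_d+(n-1-j))/d$ directly rather than your equivalent form $dk=|\mu|-|\ll|+Nd$, and for (i) it observes that $\ll=(u_d^n)$ gives $\ll^{n-j}=(1^{n-1-j})$ and then appeals to (ii)--(iii), which is the same computation you spell out by hand.
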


\begin{proof}
 The equality (\ref{eq:Ext=H}) follows directly from \cite[Theorem~3.1]{raicu-weyman-witt}, if we note that $\rank(\xi)=n_d+(n-1)$. If $\ll=(u_d^n)$ then $\ll^{n-j}=(1,1,\cdots,1,0,0,\cdots,0)=(1^{n-1-j})$ for all $j=0,\cdots,n-1$. The condition $\ol{\mu}=\ll^{n-j}$ for some $j$ is equivalent to $\mu$ being a hook partition, so (i) follows from (ii) and (iii), which we verify next. We have
\[\scpr{S_{\ll}W}{\Ext^{n_d+j}(M_{\mu},S)}=\scpr{S_{\ll}V}{H^{n-1-j}(X,\mc{M}^*_{\mu})}=\scpr{S_{\ll-(u_d^n)}V}{H^{n-1-j}(X,S_{\ol{\mu}}\mc{R}\oo\mc{Q}^{\mu_1-d}\oo\Sym(\mc{Q}^{-d}))},\]
where the last equality follows from the fact that $\det(\xi)=\det(\Sym^d V)\oo\mc{Q}^{-d}=S_{(u_d^n)}V\oo\mc{Q}^{-d}$. We get using (\ref{eq:Sll=Hj}) that
\begin{equation}\label{eq:mult0or1}
\scpr{S_{\ll-(u_d^n)}V}{H^{n-1-j}(X,S_{\ol{\mu}}\mc{R}\oo\mc{Q}^{\mu_1-kd})}=0\rm{ or }1
\end{equation}
and it is equal to $1$ precisely when $\ol{\mu}=\ll^{n-j}$ and $\mu_1-kd=\ll_{n-j}-u_d-(n-1-j)$ (we apply (\ref{eq:Sll=Hj}) with $\ll$ replaced by $\ll-(u_d^n)$, $\mu$ replaced by $\ol{\mu}$, $l=n-1-j$, and $r=\mu_1-kd$). 

Since $\mc{Q}^{\mu_1-d}\oo\Sym(\mc{Q}^{-d})=\bigoplus_{k>0}\mc{Q}^{\mu_1-kd}$, in order to prove (ii) and (iii) we need to show that, under the assumption $\ol{\mu}=\ll^{n-j}$, there exists a positive integer $k$ such that $\mu_1-kd=\ll_{n-j}-u_d-(n-1-j)$, i.e. we have to show that
\[k=\frac{\mu_1-\ll_{n-j}+u_d+(n-1-j)}{d}\in\bb{Z}_{>0}.\]
The fact that $k\in\bb{Z}$ follows from the assumption that $|\ll|\equiv|\mu|\equiv 0\ (\rm{mod }d)$. When $j=n-1$, the positivity of $k$ is equivalent to $\mu_1-\ll_1+u_d>0$, so (iii) follows. When $0\leq j\leq n-2$ we have 
\[\mu_1\geq\mu_2\overset{(\ol{\mu}=\ll^{n-j})}{=}\ll_1+1-u_d > \ll_{n-j}-u_d,\]
which combined with $(n-1-j)>0$ yields the positivity of $k$.
\end{proof}

\section{Equivariant $\D$-modules on Veronese cones}\label{sec:equivDmodsVero}

Consider a finite dimensional complex vector space $W$ of dimension $n$, and write $V=W^*$ for its dual. The Veronese map $\Ver_d:W\to\Sym^d W$ of degree $d$ is defined by $\Ver_d(w)=w^d$. We write $Z$ for the image of this map, the \defi{degree $d$ Veronese cone}. The natural $\GL(W)$--action on $Z$ decomposes into two orbits $Z=\{0\}\cup(Z\setminus\{0\})$. Given any point $0\neq w^d\in Z$, the component group of its isotropy group is a a finite cyclic group of order $d$. Using the classification theorem for simple equivariant $\D$-modules \cite[Thm.~11.6.1]{hottaetal} together with \cite[Rem.~11.6.2]{hottaetal}, we find that there are $d$ simple equivariant $\D$-modules $D_0,\cdots,D_{d-1}$ with support $Z$, and one $\D$-module $E$ whose support is the origin. The latter is well understood (see the Introduction). The goal of this section is to describe $D_0,\cdots,D_{d-1}$ as $\GL(W)$-representations.

We consider the situation of Section~\ref{subsec:pfwd}, with $X=\bb{P}V$ and $\mc{R},\mc{Q}$ as in (\ref{eq:tautPV}). We let $U=\Sym^d V$, $\eta=\mc{Q}^d$, and define $\xi$ to be the kernel of the natural map $\Sym^d V\oo\mc{O}_X\lra\mc{Q}^d$. The diagram (\ref{eq:diagrGeneric}) becomes
\begin{equation}\label{eq:diagrPV}
\xymatrix{
Y=\rm{Tot}(\mc{Q}^{-d}) \ar@{^{(}->}[r]^{s} \ar[dr]_{\pi} & \Sym^d W\times\bb{P}V \ar[d]^{p} \\
 & \Sym^d W \\
}
\end{equation}
We let $S=\Sym(\Sym^d V)$ denote the ring of polynomial functions on $\Sym^d W$, and let $\mc{S}=\Sym(\mc{Q}^d)=\bigoplus_{i\geq 0}\mc{Q}^{id}$ be the sheaf of $\mc{O}_X$-algebras with the property that $\ul{\rm{Spec}}_{\mc{O}_X}\mc{S}=Y$ (we use the terminology of \cite[Exercise~II.5.17]{hartshorne}).

For $j\in\bb{Z}$, we consider the $\mc{S}$-modules
\begin{equation}\label{eq:defMscrj}
\mc{M}_j=\bigoplus_{i\in\bb{Z}}\mc{Q}^{di-j},
\end{equation}
and note that they are in fact $\mc{D}_Y$-modules which are $\GL(W)$--equivariant: if we write $x^{1/d}$ for a local generator of $\mc{Q}$, then $\mc{S}$ is locally isomorphic to the polynomial ring $\bb{C}[x]$, and $\mc{M}_j$ is locally isomorphic to the $\bb{C}[x]$-module $x^{-j/d}\cdot\bb{C}[x,1/x]$, which is also a module over the Weyl algebra $\bb{C}[x,\partial/\partial x]$; this local descriptions glue together to global $\D_Y$-modules $\mc{M}_j$. Note also that $\mc{M}_j=\mc{M}_{j'}$ if and only if $j\equiv j'\rm{ (mod }d)$. We start by showing that the $\D$-module pushforward $\int_{\pi}\mc{M}_j$ of each of the modules $\mc{M}_j$ along $\pi$ can be realized by a complex of admissible representations, and we compute the Euler characteristic of each of these complexes in the Grothendieck group $\G(W)$ (see Section~\ref{subsec:SLFreps}).

\begin{theorem}  For a dominant weight $\ll\in\bb{Z}^n$, define $m_{\ll}$ as in (\ref{eq:defmll}). We have
\[\chi\left(\int_{\pi}\mc{M}_j\right)=\sum_{|\ll|\equiv j\rm{ (mod }d)} m_{\ll}\cdot S_{\ll}W.\]
\end{theorem}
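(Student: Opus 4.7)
The strategy is to apply Corollary~\ref{cor:GrothClassPFwd} to reduce the computation of $\chi\bigl(\int_{\pi}\mc{M}_j\bigr)$ to Euler characteristics of sheaf cohomology on $X=\bb{P}V$, which are then evaluated via Bott's Theorem~\ref{thm:bott}. Equip $\mc{M}_j=\bigoplus_{k\in\bb{Z}}\mc{Q}^{dk-j}$ with the good filtration $F_p\mc{M}_j=\bigoplus_{k\leq p}\mc{Q}^{dk-j}$, whose associated graded as an $\mc{O}_X$-module is $\gr(\mc{M}_j)=\bigoplus_k\mc{Q}^{dk-j}$. Combining this with $\Omega^p_X=S_{(1^p)}\mc{R}\oo\mc{Q}^{-p}$ and the identification $\det(\xi^*)=S_{(u_d^n)}W\oo\mc{Q}^d$ (obtained by taking $\det$ of the defining sequence $0\to\xi\to\Sym^d V\oo\mc{O}_X\to\mc{Q}^d\to 0$ together with (\ref{eq:defud})), Corollary~\ref{cor:GrothClassPFwd} gives
\[
\chi\!\left(\int_{\pi}\mc{M}_j\right)=S_{(u_d^n)}W\oo\sum_{p=0}^{n-1}(-1)^{n-1-p}\sum_{k\in\bb{Z}}\chi\!\left(R\Gamma\bigl(X,\ S_{(1^p)}\mc{R}\oo\mc{Q}^{d(k+1)-j-p}\oo\Sym(\xi^*)\bigr)\right).
\]

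Next, I filter $\xi$ using the tautological sequence (\ref{eq:tautPV}). The standard filtration of $\Sym^d V\oo\mc{O}_X$ by powers of $\mc{R}$ has graded pieces $\Sym^i\mc{R}\oo\mc{Q}^{d-i}$ for $i=0,\ldots,d$; the $i=0$ piece is the quotient $\mc{Q}^d$, so the induced filtration on $\xi$ has graded pieces for $i=1,\ldots,d$. Dualizing, taking $\Sym$, and invoking Lemma~\ref{lem:SLFgenerizes}, one may substitute
\[
\gr\bigl(\Sym(\xi^*)\bigr)=\bigotimes_{i=1}^d\Sym\bigl(\Sym^i\mc{R}^*\oo\mc{Q}^{i-d}\bigr)=\bigoplus_{\pi_1,\ldots,\pi_d\geq 0}\bigotimes_{i=1}^d\Sym^{\pi_i}(\Sym^i\mc{R}^*)\oo\mc{Q}^{\sum_i(i-d)\pi_i}
\]
for $\Sym(\xi^*)$ in the Euler-characteristic formula. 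Admissibility of cohomology (as required by Corollary~\ref{cor:GrothClassPFwd}) follows from Lemma~\ref{lem:SLFcohomologyPV}, once the Schur functors of $\mc{R}^*$ are converted to Schur functors of $\mc{R}$ via twists by appropriate powers of $\det(\mc{R})$.

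The third step applies Bott's Theorem~\ref{thm:bott}. Decomposing $\bigotimes_i\Sym^{\pi_i}(\Sym^i\mc{R}^*)$ into Schur summands $S_\mu\mc{R}^*$ and combining with $S_{(1^p)}\mc{R}$ via the Littlewood--Richardson rule produces irreducibles of the form $S_\nu\mc{R}\oo\mc{Q}^r$; Bott then computes $H^l(X,S_\nu\mc{R}\oo\mc{Q}^r)=S_\theta V$ precisely when $\nu=\tl{\theta}^{l+1}$ and $r=\theta_{l+1}-l$, via (\ref{eq:Sll=Hj}). Tensoring with the one-dimensional $S_{(u_d^n)}W$ translates $V$-weights to $W$-weights, yielding an irreducible $S_\ll W$ whose weight $\ll$ is related to Bott's $\theta$ by reversal and shift by $u_d$; the condition $|\ll|\equiv j\pmod d$ arises from the divisibility of $r$ by $d$ encoded in the $\mc{Q}^{d(k+1)-j-p}$ twist. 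Using $\tl{\ll}^i=\ll^i+(u_d^{n-1})$, the contribution at Bott degree $l=i-1$ recovers the stable plethystic multiplicity $\nu_{\ll^i}$ from (\ref{eq:defnumu}), applied to the $i\geq 2$ factors of the tensor product, with the $i=1$ factor absorbed into the $\mc{Q}^r$-twist. The alternating sign $(-1)^{n-i}$ in $m_\ll$ results from combining the de Rham sign $(-1)^{n-1-p}$ with Bott's sign $(-1)^l$ once the $p$-summation is carried out.

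The main obstacle is the combinatorial bookkeeping, which hinges on two delicate points. First, the $i=1$ factor $\Sym(\mc{R}^*\oo\mc{Q}^{1-d})$ of $\bigotimes_i\Sym(\Sym^i\mc{R}^*\oo\mc{Q}^{i-d})$ falls outside the $k\geq 2$ range of (\ref{eq:defnumu}), so its contribution must be reorganized: after combining with $S_{(1^p)}\mc{R}$ via Littlewood--Richardson and absorbing the $\mc{Q}$-shift into the $k$-summation (which runs over all of $\bb{Z}$), it effectively produces only the appropriate Bott twist and does not alter the plethystic count. Second, one must verify that the Littlewood--Richardson decomposition of $S_{(1^p)}\mc{R}\oo S_\mu\mc{R}^*$ outputs precisely the Schur functor $S_\nu\mc{R}$ needed for Bott to produce $\ll^i$. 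Assembling these ingredients while tracking signs carefully yields the claimed formula for $\chi\bigl(\int_\pi\mc{M}_j\bigr)$.
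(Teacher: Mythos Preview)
Your setup is identical to the paper's: the same good filtration on $\mc{M}_j$, the same application of Corollary~\ref{cor:GrothClassPFwd}, the same filtration of $\xi^*$ with associated graded $\bigoplus_{i=1}^d\Sym^i\mc{R}^*\oo\mc{Q}^{i-d}$. The divergence is at exactly the point you flag as ``the main obstacle'': you correctly identify the two delicate issues (the $i=1$ factor lying outside the range of (\ref{eq:defnumu}), and the interaction of $\bw^p\mc{R}$ with the $S_\mu\mc{R}^*$) but you do not resolve them. The phrases ``effectively produces only the appropriate Bott twist'', ``once the $p$-summation is carried out'', and ``assembling these ingredients while tracking signs carefully yields the claimed formula'' are not proofs; they are restatements of what remains to be done.

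The paper's resolution is a Koszul identity that you have missed. One rewrites $\Omega^i_X=\om_X\oo\bw^{n-1-i}(\mc{R}^*\oo\mc{Q})$ and uses the fact that $\mc{M}_j$ absorbs any power of $\mc{Q}^d$ to replace $\Sym(\mc{R}^*\oo\mc{Q}^{1-d})$ by $\Sym(\mc{R}^*\oo\mc{Q})$. Then
\[
\sum_{i=0}^{n-1}(-1)^{n-1-i}\,\Omega^i_X\oo\Sym(\mc{R}^*\oo\mc{Q})
=\om_X\oo\sum_{i}(-1)^{n-1-i}\bw^{n-1-i}(\mc{R}^*\oo\mc{Q})\oo\Sym(\mc{R}^*\oo\mc{Q})
=\om_X
\]
in $\G(X,W)$, since the Koszul complex on $\mc{R}^*\oo\mc{Q}$ resolves $\mc{O}_X$. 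This single step eliminates \emph{both} the de Rham $p$-sum and the troublesome $i=1$ factor simultaneously. Serre duality then converts the remaining $\bigotimes_{k=2}^d\Sym(\Sym^k\mc{R}^*\oo\mc{Q}^{k-d})\oo\mc{M}_j\oo\om_X$ into $\bigotimes_{k=2}^d\Sym(\Sym^k(\mc{R}\oo\mc{Q}^*))\oo\mc{M}_{-j}$, whose decomposition into Schur functors of $\mc{R}$ is exactly (\ref{eq:defnumu}); a single application of (\ref{eq:Sll=Hj}) then reads off $m_\ll$ directly, with no Littlewood--Richardson computation on $\mc{R}$ required. Without this collapse, the direct route you sketch---Littlewood--Richardson for $\bw^p\mc{R}\oo S_\mu\mc{R}^*$, then Bott, then summing over $p$---is genuinely more complicated, and you have not carried it out.
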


\begin{proof}
 For each $j$, the filtration induced by the direct sum decomposition of $\mc{M}_j$ is a good filtration. We get $\gr(\mc{M}_j)=\mc{M}_j$ and applying Proposition~\ref{prop:grIntM}, we have that $\int_s \mc{M}_j$ admits a filtration with associated graded
\[\rm{gr}\left(\int_s \mc{M}_j\right)=\mc{M}_j\oo\det(\xi^*)\oo\Sym(\xi^*).\]
We'd like to apply Corollary~\ref{cor:GrothClassPFwd} with $\mc{M}=\mc{M}_j$: in order to do so, we have to check that $\Omega^i_X\oo\mc{M}_j\oo\det(\xi^*)\oo\Sym(\xi^*)$ has admissible cohomology. By Lemma~\ref{lem:SLFgenerizes}, it suffices to prove this assertion after passing to an associated graded. $\xi^*$ has a filtration (see \cite[Exercise~II.5.16]{hartshorne}) with
\[\gr(\xi^*)=\bigoplus_{k=1}^d\Sym^k\mc{R}^*\oo\mc{Q}^{k-d},\]
which induces a filtration of $\Sym(\xi^*)$ with
\[\gr(\Sym(\xi^*))=\Sym\left(\bigoplus_{k=1}^d\Sym^k\mc{R}^*\oo\mc{Q}^{k-d}\right).\]
Applying Lemma~\ref{lem:SLFcohomologyPV} with $\mc{M}=\mc{M}_j\oo\gr(\Sym(\xi^*))$ and $\mc{N}=\Omega^i_X\oo\det(\xi^*)$ it follows that $\Omega^i_X\oo\mc{M}_j\oo\det(\xi^*)\oo\gr(\Sym(\xi^*))$ has admissible cohomology. Since $\dim(X)=n-1$, Corollary~\ref{cor:GrothClassPFwd} now yields
\[
\begin{split}
\chi\left(\int_{\pi}\mc{M}_j\right) = & \sum_{i=0}^{n-1} (-1)^{n-1-i}\cdot\chi(R\pi_*(\Omega^i_X\oo\mc{M}_j\oo\det(\xi^*)\oo\Sym(\xi^*))) \\
\overset{Lemma~\ref{lem:SLFgenerizes}}{=} & \sum_{i=0}^{n-1} (-1)^{n-1-i}\cdot\chi\left(R\pi_*\left(\Omega^i_X\oo\mc{M}_j\oo\det(\xi^*)\oo\Sym\left(\bigoplus_{k=1}^d\Sym^k\mc{R}^*\oo\mc{Q}^{k-d}\right)\right)\right )
\end{split}
\]
Using $\det(\xi^*)=\det(\Sym^d W)\oo\mc{Q}^d$ and $\mc{M}_j=\mc{M}_j\oo\mc{Q}^d$, we get $\mc{M}_j\oo\det(\xi^*)=\mc{M}_j\oo\det(\Sym^d W)$. Since $\Sym(\mc{A}\oplus\mc{B})=\Sym(\mc{A})\oo\Sym(\mc{B})$, we can rewrite the above equality as
\[\chi\left(\int_{\pi}\mc{M}_j\right) = \det(\Sym^d W)\oo\chi\left(R\pi_*\left(\Sym\left(\bigoplus_{k=2}^d\Sym^k\mc{R}^*\oo\mc{Q}^{k-d}\right)\oo\mc{N}\right)\right),\]
where $\mc{N}\in\G(X,W)$ is given by
\[\mc{N}=\sum_{i=0}^{n-1}(-1)^{n-1-i}\Omega^i_X\oo\mc{M}_j\oo\Sym(\mc{R}^*\oo\mc{Q}^{1-d})\]
Using again that $\mc{M}_j=\mc{M}_j\oo\mc{Q}^d$, we get that $\mc{M}_j\oo\Sym(\mc{R}^*\oo\mc{Q}^{1-d})=\mc{M}_j\oo\Sym(\mc{R}^*\oo\mc{Q})$. We have $\Omega^1_X=\mc{R}\oo\mc{Q}^*$, so $\Omega^i_X=\om_X\oo\bw^{n-1-i}(\mc{R}^*\oo\mc{Q})$, where $\om_X=\Omega^{n-1}_X$ is the canonical sheaf. It follows that
\[\mc{N}=\mc{M}_j\oo\om_X\oo\left(\sum_{i=0}^{n-1}(-1)^{n-1-i}\bw^{n-1-i}(\mc{R}^*\oo\mc{Q})\oo\Sym(\mc{R}^*\oo\mc{Q})\right)=\mc{M}_j\oo\om_X,\]
where the last equality follows from the fact that the Koszul complex on $\mc{R}^*\oo\mc{Q}$ resolves $\mc{O}_X$ as a module over the sheaf of $\mc{O}_X$-algebras $\Sym(\mc{R}^*\oo\mc{Q})$. It follows that
\[\chi\left(\int_{\pi}\mc{M}_j\right) = \det(\Sym^d W)\oo\chi\left(R\pi_*\left(\Sym\left(\bigoplus_{k=2}^d\Sym^k\mc{R}^*\oo\mc{Q}^{k-d}\right)\oo\mc{M}_j\oo\om_X\right)\right),\]
which by Serre duality is equivalent to
\begin{equation}\label{eq:chiMj1}
\chi\left(\int_{\pi}\mc{M}_j\right) = (-1)^{n-1}\cdot\det(\Sym^d W)\oo\chi\left(R\pi_*\left(\Sym\left(\bigoplus_{k=2}^d\Sym^k\mc{R}\oo\mc{Q}^{d-k}\right)\oo\mc{M}_{-j}\right)\right)^*. 
\end{equation}
(here we denoted by $*$ the duality operator on $\G(W)$ defined by $(S_{\ll}W)^*=S_{\ll}V=S_{(-\ll_n,\cdots,-\ll_1)}W$). For $k=2,\cdots,d$ we have the equality in $\G(X,W)$
\[\Sym(\Sym^k\mc{R}\oo\mc{Q}^{d-k})\oo\mc{M}_{-j}=\Sym(\Sym^k\mc{R}\oo\mc{Q}^{-k})\oo\mc{M}_{-j}=\Sym(\Sym^k(\mc{R}\oo\mc{Q}^*))\oo\mc{M}_{-j},\]
so
\begin{equation}\label{eq:chiMj2}
\begin{aligned}
\Sym\left(\bigoplus_{k=2}^d\Sym^k\mc{R}\oo\mc{Q}^{d-k}\right)&\oo\mc{M}_{-j} = \left(\bigotimes_{k=2}^d\Sym(\Sym^k(\mc{R}\oo\mc{Q}^*))\right)\oo\mc{M}_{-j} \\
\overset{(\ref{eq:defnumu})}{=} & \bigoplus_{\mu}(S_{\mu}\mc{R})^{\oplus\nu_{\mu}}\oo\mc{M}_{-j+|\mu|}.
\end{aligned}
\end{equation}

We are now ready to compute, for a dominant weight $\ll$, the multiplicity of $S_{\ll}W$ inside the virtual representation $\chi\left(\int_{\pi}\mc{M}_j\right)$. We have
\[
\begin{split}
 \scpr{S_{\ll}W}{\chi\left(\int_{\pi}\mc{M}_j\right)} \overset{(\ref{eq:defud}),(\ref{eq:chiMj1}),(\ref{eq:chiMj2})}{=} & (-1)^{n-1}\cdot\scpr{S_{\ll-(u_d^n)}V}{\chi\left( R\pi_*\left(\bigoplus_{\mu}(S_{\mu}\mc{R})^{\oplus\nu_{\mu}}\oo\mc{M}_{-j+|\mu|}\right)\right)} \\
 = & \sum_{l=0}^{n-1}(-1)^{n-1-l}\scpr{S_{\ll-(u_d^n)}V}{H^l\left(X,\bigoplus_{\mu}(S_{\mu}\mc{R})^{\oplus\nu_{\mu}}\oo\mc{M}_{-j+|\mu|}\right)} \\
\end{split}
\]
Using (\ref{eq:Sll=Hj}) we see that the only terms on the right hand side with a non-trivial contribution are the ones for which $\mu=\tl{\ll}^{l+1}-(u_d^{n-1})=\ll^{l+1}$, and $\mc{Q}^{\ll_{l+1}-u_d-l}$ appears inside $\mc{M}_{-j+|\mu|}$, i.e. $|\ll|\equiv j\rm{ (mod }d)$. For $\ll$ satisfying $|\ll|\equiv j\rm{ (mod }d)$, we thus get
\[\scpr{S_{\ll}W}{\chi\left(\int_{\pi}\mc{M}_j\right)} = \sum_{l=0}^{n-1}(-1)^{n-(l+1)}\nu_{\ll^{l+1}}\overset{(\ref{eq:defmll})}{=}m_{\ll}.\qedhere\]
\end{proof}
%

To finish the proof of the first part of Theorem~\ref{thm:main}, it remains to show the following equalities in the Grothendieck group $\G(W)$:
\[\chi\left(\int_{\pi}\mc{M}_j\right)=\begin{cases}
 D_0+(-1)^{n-1}\cdot E, & j=0; \\
 D_j, & j=1,\cdots,d-1.\\
\end{cases}
\]

We first deal with the case $j=0$: we have an exact sequence of $\D_Y$-modules
\[0\lra\mc{O}_Y\lra\mc{M}_0\lra\mc{E}\lra 0.\]
If we think of $X$ as a closed subset of $Y$, embedded by the zero section, and consider the open immersion $u:Y\setminus X\to Y$, then $\mc{M}_0=\int_u\mc{O}_{X\setminus Y}$ and $\mc{E}=H^1_X(\mc{O}_Y)$ is the first local cohomology sheaf of $\mc{O}_Y$ with support in $X$. This is a relative version of the exact sequence
\[0\lra\bb{C}[x]\lra\bb{C}[x,1/x]\lra H^1_{\{0\}}(\bb{C}[x])\lra 0\]
of $\D$-modules on the affine line $\bb{A}^1$. We get
\begin{equation}\label{eq:chiM0}
\chi\left(\int_{\pi}\mc{M}_0\right)=\chi\left(\int_{\pi}\mc{O}_Y\right)+\chi\left(\int_{\pi}\mc{E}\right). 
\end{equation}
$\mc{E}$ is supported on the exceptional divisor of $\pi$ (which we identified with $X$ via the $0$ section), and it corresponds via the Riemann-Hilbert correspondence to the intersection cohomology sheaf $IC_X$ on $X$, with respect to the trivial local system. Since $\pi$ contracts $X$ to $\{0\}$ and $X$ is smooth, $\int_{\pi}\mc{E}$ is described entirely in terms of the singular cohomology of $X$: it consists of $\dim(H^{2j}(X,\bb{C}))$ copies of $E$ in cohomological degree $2j-\dim(X)$, for each $j=0,\cdots,\dim(X)$. Since $X=\bb{P}V=\bb{P}^{n-1}$, we get $H^{2j}(X,\bb{C})=\bb{C}$, $H^{2j+1}(X,\bb{C})=0$, and therefore
\begin{equation}\label{eq:chiE}
\chi\left(\int_{\pi}\mc{E}\right)=(-1)^{n-1}\cdot n\cdot E.
\end{equation}
To compute $\chi\left(\int_{\pi}\mc{O}_Y\right)$ we need to understand explicitly the Decomposition Theorem for the map $\pi:Y\to X$. This is a special case of \cite[Theorem~6.1]{deCataldo-Migliorini-Mustata}, which in our case says that $\int_{\pi}\mc{O}_Y$ consists of one copy of $D_0$ in cohomological degree $0$, and one copy of $E$ in each of the cohomological degrees $n-2,n-4,\cdots,2-n$. It follows that
\begin{equation}\label{eq:chiOy}
\chi\left(\int_{\pi}\mc{O}_Y\right)=D_0+(-1)^{n-2}\cdot(n-1)\cdot E.
\end{equation}
The formula for $\chi\left(\int_{\pi}\mc{M}_0\right)$ now follows from (\ref{eq:chiM0}), (\ref{eq:chiE}) and (\ref{eq:chiOy}).

Now for $j=1,\cdots,d-1$ the Euler characteristics $\chi\left(\int_{\pi}\mc{M}_j\right)$ have no overlaps in terms of the $S_{\ll}W$'s that occur with non-zero multiplicity, and they must be described entirely in terms of the $\D$-modules $D_1,\cdots,D_{n-1}$, ($E$ and $D_0$ can't show up since their characters have weights of total size divisible by $d$). It follows that each $\chi\left(\int_{\pi}\mc{M}_j\right)=m_j\cdot D_j$ in $\G(W)$ for some $m_j\in\bb{Z}$. To show that $m_j=1$ it suffices to show that some multiplicity $m_{\ll}=\scpr{S_{\ll}W}{\chi\left(\int_{\pi}\mc{M}_j\right)}$ is equal to $1$. To do so, we choose $\ll\in\bb{Z}^n$ with
\[\ll_1=u_d+1,\quad\ll_2=\cdots=\ll_{n-1}=u_d-1,\ll_n<u_d,\]
and such that $|\ll|\equiv j\rm{ (mod }d)$. It follows that $\ll^n=(2,0,\cdots,0)$, $\nu_{\ll^n}=1$, and $\nu_{\ll^i}=0$ for $i<n$. This implies that $m_{\ll}=1$, as desired.

\section{Local cohomology with support in Veronese cones}\label{sec:loccohVero}

Let $Z$ denote as before the Veronese cone in $\Sym^d W$, and let $S=\Sym(\Sym^d V)$ be the ring of polynomial functions on the vector space $\Sym^d W$. Write $n_d={n-1+d\choose d}-n$ for the codimension of $Z$ inside $\Sym^d W$. In this section we prove the final part of Theorem~\ref{thm:main}:

\begin{theorem}\label{thm:loccoh}
 The local cohomology modules $H_Z^{\bullet}(S)$ of $S$ with support in $Z$ are given as follows:
\[
 H_Z^{\bullet}(S)=
 \begin{cases}
  D_0, & \bullet = n_d; \\
  0, & otherwise.
 \end{cases}
\]
\end{theorem}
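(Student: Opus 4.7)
By Proposition~\ref{prop:loccohDmod}, $D_0$ appears as a composition factor of $H_Z^{n_d}(S)$ with multiplicity one, and every other composition factor of $H_Z^{\bullet}(S)$ (in any degree) is supported at the origin, hence isomorphic to $E$. The task therefore reduces to showing that $E$ is not a composition factor of any $H_Z^{\bullet}(S)$; once this is established, the vanishing $H_Z^{\bullet}(S)=0$ for $\bullet\neq n_d$ follows automatically, since those modules would then be supported at the origin and $E$-free, hence zero.

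To rule out $E$, my plan is to test the $\GL(W)$-multiplicity of its generator $S_{(u_d^n)}W = \det(\Sym^d W)$ inside $H_Z^{\bullet}(S)$, using the standard description
\[
H_Z^{\bullet}(S) = \varinjlim_k \Ext^{\bullet}_S(S/J_k,S)
\]
for a cofinal chain of $\GL$-equivariant ideals $J_k$ with radical equal to the defining ideal of $Z$. I would choose the $J_k$ so that each quotient $S/J_k$ admits a $\GL$-equivariant filtration whose associated graded is a finite direct sum of (truncations of) the modules $M_{\mu}$ of Lemma~\ref{lem:defMll}, whose Ext groups into $S$ are computed explicitly in Proposition~\ref{prop:ExtMll}. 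This produces a spectral sequence of admissible representations converging to $H_Z^{\bullet}(S)$, whose $E_1$ page is assembled from the explicit pieces in Proposition~\ref{prop:ExtMll}.

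The decisive input is Proposition~\ref{prop:ExtMll}(i): the irreducible $S_{(u_d^n)}W$ appears in $\Ext^{\bullet}_S(M_{\mu},S)$ only when $\mu$ is a hook partition of positive size. This restricts contributions to $\scpr{S_{(u_d^n)}W}{H_Z^{\bullet}(S)}$ to a combinatorially explicit sum indexed by hooks $(\mu_1,1,\ldots,1)$, which using parts~(ii) and~(iii) of Proposition~\ref{prop:ExtMll} I would compare against the multiplicity $a_{(u_d^n)}^0 = m_{(u_d^n)}+(-1)^n\cdot e_{(u_d^n)}$ predicted for $D_0$ by Theorem~\ref{thm:main} (noting that $e_{(u_d^n)}=1$). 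Agreement forces the multiplicity of $E$ in $H_Z^{\bullet}(S)$ to be zero in every degree, completing the proof.

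The main obstacle will be twofold: first, choosing the cofinal chain $J_k$ so that the spectral sequence is tractable and its higher differentials are controllable in the limit, and second, performing the combinatorial matching between hook contributions in $\Ext^{\bullet}_S(M_{\mu},S)$ and the terms entering $m_{(u_d^n)}$. As an independent route, one can instead invoke Lemma~\ref{lem:topdR=0}: a nonzero surjection $H_Z^{n_d}(S)\onto E$ would produce nonzero top de~Rham cohomology $H^N_{dR}(H_Z^{n_d}(S))$, and one would argue that this group vanishes by comparing with the singular cohomology of the complement $\Sym^d W\setminus Z$, whose topology is governed by that of the smooth Veronese orbit $Z\setminus\{0\}$ and the line bundle $\mc{O}(-d)$ appearing in the resolution of Section~\ref{sec:equivDmodsVero}.
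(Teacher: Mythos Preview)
Your overall strategy matches the paper's: reduce via Proposition~\ref{prop:loccohDmod} to showing $E$ is not a composition factor, then test the weight $(u_d^n)$ through a spectral sequence built from a cofinal system of $\GL$-equivariant ideals whose successive quotients are sums of the $M_{\mu}$. The paper carries this out exactly, filtering $S$ by ideals $I_r$ with $I_r/I_{r+1}\simeq M_{\ll(r)}^{\oplus p_{\ll(r)}}$, where $p_{\mu}=s_{\mu}-a_{\mu}$ is the ``primitive'' multiplicity of Section~\ref{subsec:stablemults}.

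Where your plan diverges, and becomes unnecessarily hard, is in the final step. You propose to compute the hook contributions to $\scpr{S_{(u_d^n)}W}{H_Z^{\bullet}(S)}$ and then match them against the character formula for $D_0$ from Theorem~\ref{thm:main}. The paper avoids this entirely: the point is that \emph{no hook $M_{\mu}$ ever appears in the filtration}. Indeed, if $\mu_2=1$ then $s_{\mu}=0$ by (\ref{eq:sllhook}), so $p_{\mu}=0$; and if $\mu=k\d$ with $k>0$ then $s_{\mu}=a_{\mu}=1$, so again $p_{\mu}=0$. Hence, by Proposition~\ref{prop:ExtMll}(i), $\det(\Sym^d W)$ occurs nowhere on the $E_2$ page, and you are done --- no combinatorial matching, no appeal to the character of $D_0$. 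What you list as the ``main obstacle'' simply evaporates once the filtration is chosen so that only $M_{\mu}$ with $p_{\mu}\neq 0$ occur.

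A smaller caution: your phrase ``(truncations of) the modules $M_{\mu}$'' is dangerous. Proposition~\ref{prop:ExtMll} computes $\Ext^{\bullet}_S(M_{\mu},S)$ for the full $M_{\mu}$, and truncating would alter the answer. The paper's filtration is arranged (Lemma~\ref{lem:Ir/Ir+1}) so that the graded pieces are honest copies of $M_{\ll(r)}$, not truncations.

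Your alternative de~Rham route is also the one the paper gives: via Lemma~\ref{lem:topdR=0} one needs $H^N_{dR}(H_Z^{n_d}(S))=0$, which the paper identifies (after quoting Ogus for the vanishing in other degrees) with the de~Rham homology group $H_n^{dR}(Z)$ and then kills using the cone exact sequence over $\bb{P}^{n-1}$ and the hard Lefschetz isomorphism $H_i(\bb{P}^{n-1})\overset{\cap\xi}{\to}H_{i-2}(\bb{P}^{n-1})$.
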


\begin{proof} We summarize our proof strategy before proceeding to give more details:
\begin{itemize}
 \item[(a)] The theorem reduces by Proposition~\ref{prop:loccohDmod} and the paragraph following it to showing that $E$ doesn't occur as a composition factor of any of the local cohomology modules $H^{\bullet}_Z(S)$.
 \item[(b)] The local cohomology modules can be computed as a direct limit
\[H^{\bullet}_Z(S)=\varinjlim_{r}\Ext^{\bullet}_S(S/I_r,S)\]
where $(I_r)_{r\geq 0}$ is a system of ideals which is cofinal with the one consisting of the powers of the defining ideal $I_Z$ of $Z$ \cite[Ex.~A1D.1]{eis-syzygies}.
 \item[(c)] We choose a sequence of ideals $(I_r)_{r\geq 0}$ as in (b), such that each $I_r$ is $\GL$-equivariant, and each successive quotient $I_r/I_{r+1}$ is a direct sum of the modules $M_{\ll}$ studied in Section~\ref{subsec:Mll}. In particular, we know how to compute the $\Ext$ modules $\Ext^{\bullet}_S(I_r/I_{r+1},S)$.
 \item[(d)] It follows from (b) that there is a spectral sequence
 \[E^{p,q}_2=\Ext^{p-q}_S(I_q/I_{q+1},S)\Rightarrow H^{p-q}_Z(S).\]
 \item[(e)] $\det(\Sym^d W)$ appears as a subrepresentation of $E$, but it doesn't occur on the $E_2$ page, so it cannot occur in any of the local cohomology modules $H^{\bullet}_Z(S)$.
\end{itemize}

Parts (a) and (b) require no further explanations, so we start by constructing the ideals $I_r$. Recall the definition of $\d$, $A^{\ll}$, $S^{\ll}$, $a_{\ll}$, $s_{\ll}$ from Section~\ref{subsec:stablemults}. For each $\ll$, we choose a complement $P^{\ll}\subset S^{\ll}$ to $A^{\ll}$: $P^{\ll}$ is a subrepresentation of $S^{\ll}$ such that
\[S^{\ll}=A^{\ll}\oplus P^{\ll}.\]
We write $p_{\ll}=\scpr{S_{\ll}V}{P^{\ll}}=s_{\ll}-a_{\ll}$ and call the elements of $P^{\ll}$ \defi{primitive}. Note that the stabilization result of Manivel can be reformulated as follows: given $\ll$, 
\begin{equation}\label{eq:pll+kd=0}
p_{\ll+k\d}=0\rm{ for }k\gg 0,
\end{equation}
and moreover, if $\ol{\ll}=(\ll_2,\cdots,\ll_n)$ then
\begin{equation}\label{eq:sumpllkd}
\sum_{k\in\bb{Z}}p_{\ll+k\d}=\nu_{\ol{\ll}}. 
\end{equation}
For each $\ll$, we define $I_{\ll}$ to be the ideal generated by the elements in $P^{\ll}$:
\[I_{\ll}=(P^{\ll})\subset S.\]
We choose a total ordering of the partitions $\ll=(\ll_1,\cdots,\ll_n)$ for which $p_{\ll}\neq 0$,
\begin{equation}\label{eq:orderll}
\ll(0),\ll(1),\cdots,\ll(r),\cdots 
\end{equation}
satisfying the following properties:
\begin{itemize}
 \item[(1)] For $i<j$, $\ll(i)$ does not contain $\ll(j)$, i.e. there exists $k\in\{1,\cdots,n\}$ such that $\ll(i)_k<\ll(j)_k$.
 
 \item[(2)] The function $g(i)=\ll(i)_2+\cdots+\ll(i)_n$ that measures the sum of all but the first part of $\ll(i)$ is non-decreasing.
\end{itemize}
Note that there are infinitely many $\ll$'s with fixed $\ll_2+\cdots+\ll_n$, so one may wonder whether an ordering (\ref{eq:orderll}) exists which satisfies (2). However, the only partitions $\ll$ appearing in (\ref{eq:orderll}) are the ones for which $p_{\ll}\neq 0$, and it follows from (\ref{eq:pll+kd=0}) that after fixing $\ll_2+\cdots+\ll_n$, there are only finitely many such $\ll$'s.

We define a decreasing sequence of ideals
\[S=I_0\supset I_1\supset\cdots\supset I_r\supset\cdots,\rm{ by}\]
\[I_r=\sum_{i\geq r} I_{\ll(i)}.\]
Note that $I_r/I_{r+1}$ is generated by the (image of the) elements of $P^{\ll(r)}$. Moreover,
\begin{lemma}\label{lem:Ir/Ir+1}
 We have an isomorphism of $S$-modules
 \[I_r/I_{r+1}\simeq M_{\ll(r)}^{\oplus p_{\ll(r)}}.\]
\end{lemma}

\begin{proof} Since $I_r/I_{r+1}$ is generated by its $\ll(r)$-isotypic component, it follows from Lemma~\ref{lem:defMll} that it is enough to prove the isomorphism only as $\GL$-representations. It follows from (\ref{eq:multinjbll}) that
\[\scpr{S_{\ll(r)+k\d}V}{I_{\ll(r)}}=p_{\ll(r)}.\]
In order to show that $\scpr{S_{\ll(r)+k\d}V}{I_r/I_{r+1}}=p_{\ll(r)}$ we then have to show that for $i>r$
\begin{equation}\label{eq:llrlli0}
\scpr{S_{\ll(r)+k\d}V}{I_{\ll(r)}\cap I_{\ll(i)}}=0. 
\end{equation}
The only way $I_{\ll(i)}$ can have a non-trivial $(\ll(r)+k\d)$-isotypic component is if $\ll(i)$ was contained in $\ll(r)+k\d$. If $i>r$, then by conditions (1) and (2) this is is only possible if $\ll(i)=\ll(r)+k'\d$ for some $0<k'\leq k$. Since $I_{\ll(i)}$ is generated by primitive elements (i.e. elements that don't come from $I_{\ll(r)}$), (\ref{eq:llrlli0}) follows.

It remains to show that if $\mu\neq\ll(r)+k\d$ then $\scpr{S_{\mu}V}{I_r/I_{r+1}}=0$. If $S_{\mu}V$ appears in $I_r/I_{r+1}$ then $\mu$ contains $\ll(r)$, and since it is not of the form $\ll(r)+k\d$, then we must have $\mu_2+\cdots+\mu_n>\ll(r)_2+\cdots+\ll(r)_n$. By condition (2), $I_{r+1}$ contains the whole $\mu$-isotypic component of $S$, so $S_{\mu}V$ can't appear in $I_r/I_{r+1}$.
\end{proof}

Lemma~\ref{lem:Ir/Ir+1} proves (c), part (d) is clear, so it remains to check (e). By Proposition~\ref{prop:ExtMll}(i), $\det(\Sym^d W)$ only occurs as a subrepresentation in $\Ext^{\bullet}_S(M_{\mu},S)$ when $\mu$ is a hook partition of size greater than zero and divisible by $d$. However, if $\mu$ is a hook partition with $\mu_2=1$ then $p_{\mu}=0$ (in fact $s_{\mu}=0$ by (\ref{eq:sllhook})), so no $\ll(r)$ is equal to $\mu$. If $\mu=(kd,0,\cdots)=k\d$ for $k>0$ then $p_{\mu}=s_{\mu}-a_{\mu}=1-1=0$. Therefore  $\det(\Sym^d W)$ doesn't occur as a subrepresentation in $\Ext^{\bullet}_S(I_r/I_{r+1},S)$ for any $r$. This proves (e) and concludes the proof of Theorem~\ref{thm:loccoh}.
\end{proof}

Let's briefly see now how Theorem~\ref{thm:loccoh} allows for an alternative derivation of the formula for the character of $D_0$ in Theorem~\ref{thm:main}. It follows from Lemma~\ref{lem:Ir/Ir+1} that
\[\Ext^{\bullet}_S(I_r/I_{r+1},S)=\Ext^{\bullet}_S(M_{\ll(r)},S)^{\oplus p_{\ll(r)}},\]
so $\Ext^j_S(I_r/I_{r+1},S)$ vanishes outside the range $n_d\leq j\leq n_d+n-1$. By the spectral sequence in step (d) of the proof of Theorem~\ref{thm:loccoh}, we obtain the following equality in $\G(W)$:
\begin{equation}\label{eq:D0specseq}
D_0=\sum_{j=0}^{n-1}(-1)^j\cdot\sum_{\substack{\mu=(\mu_1\geq\cdots\geq\mu_n\geq 0)\\|\mu|\equiv 0\ (\rm{mod }d)}}p_{\mu}\cdot\Ext^{n_d+j}(M_{\mu},S). 
\end{equation}
For $j=0,\cdots,n-2$, it follows from Proposition~\ref{prop:ExtMll}(ii) that
\begin{equation}\label{eq:sumpmuext}
 \sum_{\mu}p_{\mu}\cdot\Ext^{n_d+j}(M_{\mu},S)=\sum_{\substack{\ll,\mu \\ \ol{\mu}=\ll^{n-j}}}p_{\mu}\cdot S_{\ll}W=\sum_{\ll}\left(\sum_{\ol{\mu}=\ll^{n-j}}p_{\mu}\right)\cdot S_{\ll}W\overset{(\ref{eq:sumpllkd})}{=}\sum_{\ll}\nu_{\ll^{n-j}}\cdot S_{\ll}W.
\end{equation}
Note that the conditions $\ol{\mu}=\ll^1$ and $\mu_1-\ll_1+u_d>0$ in Proposition~\ref{prop:ExtMll}(iii) are equivalent to $\mu=\ll-(u_d^n)+k\d$ for some $k>0$. It follows that for $j=n-1$ we get
\begin{equation}\label{eq:sumpmuextn-1}
\begin{aligned} 
 &\sum_{\mu}p_{\mu}\cdot\Ext^{n_d+n-1}(M_{\mu},S) = \sum_{\ll}\left(\sum_{\substack{k>0 \\ \mu=\ll-(u_d^n)+k\d}}p_{\mu}\right)\cdot S_{\ll}W \\
 =&\sum_{\ll}\left(\sum_{\substack{k\in\bb{Z} \\ \mu=\ll-(u_d^n)+k\d}}p_{\mu}-\sum_{\substack{k\leq 0 \\ \mu=\ll-(u_d^n)+k\d}}p_{\mu}\right)\cdot S_{\ll}W\overset{(\ref{eq:sumpllkd}),(\ref{eq:multinj}),(\ref{eq:multinjbll})}{=}\sum_{\ll}(\nu_{\ll^1}-e_{\ll})\cdot S_{\ll}W.
\end{aligned}
\end{equation}
It follows from (\ref{eq:D0specseq}), (\ref{eq:sumpmuext}) and (\ref{eq:sumpmuextn-1}) that
\[D_0=\sum_{\ll}\left(\sum_{j=0}^{n-2}(-1)^j\cdot\nu_{\ll^{n-j}}+(-1)^{n-1}\cdot(\nu_{\ll^1}-e_{\ll})\right)\cdot S_{\ll}W=\sum_{\ll}(m_{\ll}+(-1)^n e_{\ll})\cdot S_{\ll}W,\]
which is precisely the formula proved in Theorem~\ref{thm:main}.

We end with an alternative proof of Theorem~\ref{thm:loccoh}, suggested by Robin Hartshorne:

\begin{proof}[Alternative proof of Theorem~\ref{thm:loccoh}] Using \cite[Example~4.6]{ogus}, there exists only one non-vanishing local cohomology module, namely $H_Z^{n_d}(S)$. To prove the theorem it is then sufficient to check that $E$ does not appear as a composition factor of $H^{n_d}_Z(S)$. This will follow from Lemma~\ref{lem:topdR=0} as soon as we can show that the top de Rham cohomology group of $H^{n_d}_Z(S)$ is zero, which we do next.

We begin by relating the de Rham cohomology groups of $H^{n_d}_Z(S)$ to the de Rham homology groups of $Z$, denoted $H_{\bullet}^{dR}(Z)$ \cite[Section~II.3]{hartshorne-deRham}. We write $X=\Sym^d W$, $N=\dim(X)=n_d+n$, and recall that
\[H_i^{dR}(Z)=\bb{H}_Z^{2N-i}(X,dR(S)),\]
where $dR(S)$ is the de Rham complex (\ref{eq:defdeRham}), and $\bb{H}_Z^{\bullet}$ denotes hypercohomology with support in $Z$. The spectral sequence $E_2^{p,q}=H^p_{dR}(H_Z^q(S))\Rightarrow\bb{H}_Z^{p+q}(X,dR(S))$
degenerates since $H_Z^{n_d}(S)$ is the unique non-vanishing local cohomology module. We get $\bb{H}_Z^{p+n_d}(X,dR(S))=H^p_{dR}(H_Z^{n_d}(S))$ for all $p$, so
\begin{equation}\label{eq:deRhamcoh=hom}
H^N_{dR}(H_Z^{n_d}(S))=\bb{H}_Z^{N+n_d}(X,dR(S))=H_n^{dR}(Z). 
\end{equation}
Since $Z$ is the cone over the degree $d$ Veronese variety, which is abstractly isomorphic to $\bb{P}^{n-1}$, we get as in \cite[Proposition~III.3.2]{hartshorne-deRham} an exact sequence
\[\cdots\lra H_n^{dR}(\bb{P}^{n-1})\overset{\cap\xi}{\lra} H_{n-2}^{dR}(\bb{P}^{n-1})\lra H_n^{dR}(Z)\lra H_{n-1}^{dR}(\bb{P}^{n-1})\overset{\cap\xi}{\lra} H_{n-3}^{dR}(\bb{P}^{n-1})\lra\cdots.\]
Here $\xi\in H^2(\bb{P}^{n-1})$ denotes the hyperplane class, and the de Rham homology of $\bb{P}^{n-1}$ is the same as the singular homology, since $\bb{P}^{n-1}$ is smooth. Since $H_i(\bb{P}^{n-1})\overset{\cap\xi}{\lra} H_{i-2}(\bb{P}^{n-1})$ is an isomorphism (unless $i=0$ or $i=2n$), we get that $H_n^{dR}(Z)=0$, which combined with (\ref{eq:deRhamcoh=hom}) and Lemma~\ref{lem:topdR=0} concludes the proof of the theorem. 
\end{proof}

\section*{Acknowledgments} 
I am grateful to Sam Evens for discussions on admissible representations, to Robin Hartshorne for helpful exchanges related to de Rham and local (co)homology, to Andr\'as L\H orincz and Jerzy Weyman for pointing me to the literature around Levasseur's conjecture, and to Mircea Musta\c t\u a, Claudia Polini, Anurag Singh, Matteo Varbaro and Uli Walther for inspiring conversations. Experiments with the computer algebra software Macaulay2 \cite{M2} have provided numerous valuable insights. This work was supported by the National Science Foundation Grant No.~1458715.


	\begin{bibdiv}
		\begin{biblist}

\bib{borel}{book}{
   author={Borel, A.},
   author={Grivel, P.-P.},
   author={Kaup, B.},
   author={Haefliger, A.},
   author={Malgrange, B.},
   author={Ehlers, F.},
   title={Algebraic $D$-modules},
   series={Perspectives in Mathematics},
   volume={2},
   publisher={Academic Press, Inc., Boston, MA},
   date={1987},
   pages={xii+355},
   isbn={0-12-117740-8},
   review={\MR{882000 (89g:32014)}},
}

\bib{deCataldo-Migliorini-Mustata}{article}{
   author={de Cataldo, Mark},
   author={Migliorini, Luca},
   author={Musta\c t\u a, Mircea},
   title={Combinatorics and topology of proper toric maps},
   journal = {arXiv},
   number = {1407.3497},
   date={2014}
}


\bib{eis-syzygies}{book}{
   author={Eisenbud, David},
   title={The geometry of syzygies},
   series={Graduate Texts in Mathematics},
   volume={229},
   note={A second course in commutative algebra and algebraic geometry},
   publisher={Springer-Verlag},
   place={New York},
   date={2005},
   pages={xvi+243},
   isbn={0-387-22215-4},
   review={\MR{2103875 (2005h:13021)}},
}

\bib{ful-har}{book}{
   author={Fulton, William},
   author={Harris, Joe},
   title={Representation theory},
   series={Graduate Texts in Mathematics},
   volume={129},
   note={A first course;
   Readings in Mathematics},
   publisher={Springer-Verlag},
   place={New York},
   date={1991},
   pages={xvi+551},
   isbn={0-387-97527-6},
   isbn={0-387-97495-4},
   review={\MR{1153249 (93a:20069)}},
}

\bib{M2}{article}{
          author = {Grayson, Daniel R.},
          author = {Stillman, Michael E.},
          title = {Macaulay 2, a software system for research
                   in algebraic geometry},
          journal = {Available at \url{http://www.math.uiuc.edu/Macaulay2/}}
        }

\bib{hartshorne-deRham}{article}{
   author={Hartshorne, Robin},
   title={On the De Rham cohomology of algebraic varieties},
   journal={Inst. Hautes \'Etudes Sci. Publ. Math.},
   number={45},
   date={1975},
   pages={5--99},
   issn={0073-8301},
   review={\MR{0432647 (55 \#5633)}},
}

\bib{hartshorne}{book}{
   author={Hartshorne, Robin},
   title={Algebraic geometry},
   note={Graduate Texts in Mathematics, No. 52},
   publisher={Springer-Verlag},
   place={New York},
   date={1977},
   pages={xvi+496},
   isbn={0-387-90244-9},
   review={\MR{0463157 (57 \#3116)}},
}

\bib{hottaetal}{book}{
   author={Hotta, Ryoshi},
   author={Takeuchi, Kiyoshi},
   author={Tanisaki, Toshiyuki},
   title={$D$-modules, perverse sheaves, and representation theory},
   series={Progress in Mathematics},
   volume={236},
   note={Translated from the 1995 Japanese edition by Takeuchi},
   publisher={Birkh\"auser Boston, Inc., Boston, MA},
   date={2008},
   pages={xii+407},
   isbn={978-0-8176-4363-8},
   review={\MR{2357361 (2008k:32022)}},
   doi={10.1007/978-0-8176-4523-6},
}

\bib{24hrs}{book}{
   author={Iyengar, Srikanth B.},
   author={Leuschke, Graham J.},
   author={Leykin, Anton},
   author={Miller, Claudia},
   author={Miller, Ezra},
   author={Singh, Anurag K.},
   author={Walther, Uli},
   title={Twenty-four hours of local cohomology},
   series={Graduate Studies in Mathematics},
   volume={87},
   publisher={American Mathematical Society, Providence, RI},
   date={2007},
   pages={xviii+282},
   isbn={978-0-8218-4126-6},
   review={\MR{2355715 (2009a:13025)}},
}

\bib{langley-remmel}{article}{
   author={Langley, T. M.},
   author={Remmel, J. B.},
   title={The plethysm $s_\lambda[s_\mu]$ at hook and near-hook
   shapes},
   journal={Electron. J. Combin.},
   volume={11},
   date={2004},
   number={1},
   pages={Research Paper 11, 26 pp. (electronic)},
   issn={1077-8926},
   review={\MR{2035305 (2004j:05128)}},
}

\bib{levasseur}{article}{
   author={Levasseur, Thierry},
   title={Radial components, prehomogeneous vector spaces, and rational
   Cherednik algebras},
   journal={Int. Math. Res. Not. IMRN},
   date={2009},
   number={3},
   pages={462--511},
   issn={1073-7928},
   review={\MR{2482122 (2010g:22028)}},
   doi={10.1093/imrn/rnn137},
}

\bib{lyubeznik-singh-walther}{article}{
   author={Lyubeznik, Gennady},
   author={Singh, Anurag},
   author={Walther, Uli},
   title={Local cohomology modules supported at determinantal ideals},
   journal = {arXiv},
   number = {1308.4182},
   date={2013}
}

\bib{macdonald}{book}{
   author={Macdonald, I. G.},
   title={Symmetric functions and Hall polynomials},
   series={Oxford Mathematical Monographs},
   edition={2},
   note={With contributions by A. Zelevinsky;
   Oxford Science Publications},
   publisher={The Clarendon Press Oxford University Press},
   place={New York},
   date={1995},
   pages={x+475},
   isbn={0-19-853489-2},
   review={\MR{1354144 (96h:05207)}},
}

\bib{manivel}{article}{
   author={Manivel, L.},
   title={Gaussian maps and plethysm},
   conference={
      title={Algebraic geometry},
      address={Catania, 1993/Barcelona},
      date={1994},
   },
   book={
      series={Lecture Notes in Pure and Appl. Math.},
      volume={200},
      publisher={Dekker, New York},
   },
   date={1998},
   pages={91--117},
   review={\MR{1651092 (99h:20070)}},
}

\bib{ogus}{article}{
   author={Ogus, Arthur},
   title={Local cohomological dimension of algebraic varieties},
   journal={Ann. of Math. (2)},
   volume={98},
   date={1973},
   pages={327--365},
   issn={0003-486X},
   review={\MR{0506248 (58 \#22059)}},
}

\bib{raicu-weyman}{article}{
   author={Raicu, Claudiu},
   author={Weyman, Jerzy},
   title={Local cohomology with support in generic determinantal ideals},
   journal={Algebra Number Theory},
   volume={8},
   date={2014},
   number={5},
   pages={1231--1257},
   issn={1937-0652},
   review={\MR{3263142}},
   doi={10.2140/ant.2014.8.1231},
}

\bib{raicu-weyman-witt}{article}{
   author={Raicu, Claudiu},
   author={Weyman, Jerzy},
   author={Witt, Emily E.},
   title={Local cohomology with support in ideals of maximal minors and
   sub-maximal Pfaffians},
   journal={Adv. Math.},
   volume={250},
   date={2014},
   pages={596--610},
   issn={0001-8708},
   review={\MR{3122178}},
   doi={10.1016/j.aim.2013.10.005},
}

\bib{weyman}{book}{
   author={Weyman, Jerzy},
   title={Cohomology of vector bundles and syzygies},
   series={Cambridge Tracts in Mathematics},
   volume={149},
   publisher={Cambridge University Press},
   place={Cambridge},
   date={2003},
   pages={xiv+371},
   isbn={0-521-62197-6},
   review={\MR{1988690 (2004d:13020)}},
   doi={10.1017/CBO9780511546556},
}

		\end{biblist}
	\end{bibdiv}

\end{document}